\numberwithin{equation}{section}
\theoremstyle{definition}
\newtheorem{thm}{Theorem}[section]
\newtheorem{cor}[thm]{Corollary}
\newtheorem{lem}[thm]{Lemma}
\newtheorem{prop}[thm]{Proposition}
\newtheorem{defi}[thm]{Definition}
\newtheorem{rem}[thm]{Remark}
\newtheorem{note}[thm]{Notation}
\DeclareMathOperator{\Hc}{\mathcal{H}om}
\DeclareMathOperator{\Ima}{\mathrm{Im}}
\DeclareMathOperator{\p3}{\mathbb{P}^3}
\DeclareMathOperator{\pr}{\mathrm{pr}}
\DeclareMathOperator{\mo}{\mathcal{O}}
\newcommand{\mr}[1]{\mathrm{#1}}
\newcommand{\mb}[1]{\mathbb{#1}}
\newcommand{\mbf}[1]{\mathbf{#1}}
\newcommand{\mc}[1]{\mathcal{#1}}
\newcommand{\ov}[1]{\overline{#1}}
\newcommand{\mf}[1]{\mathfrak{#1}}
\begin{document}

\title[Hodge conjecture]
{Hodge conjecture for the moduli space of semi-stable sheaves over a nodal curve}

\author[A. Dan]{Ananyo Dan}

\address{School of Mathematics and Statistics, University of Sheffield, Hicks building, Hounsfield Road, S3 7RH, UK}

\email{a.dan@sheffield.ac.uk}

\author[I. Kaur]{Inder Kaur}

\address{Institut für Mathematik,
Goethe-Universität Frankfurt,
Robert-Mayer-Str. 6-8,
60325 Frankfurt am Main, Germany}

\email{kaur@math.uni-frankfurt.de}

\subjclass[2010]{$14$C$30$, $32$S$35$, $14$D$07$, $32$G$20$,  $14$D$20$, $14$D$22$, $14$D$05$}

\keywords{Hodge conjecture, limit mixed Hodge structures, cycle class, algebraic Hodge polynomial, 
nodal curves, Moduli space of semi-stable sheaves}

\date{\today}

\begin{abstract}
 In this article, we prove the Hodge conjecture for a desingularization of the moduli space of rank $2$, semi-stable, torsion-free 
 sheaves with fixed odd degree determinant over a very general irreducible nodal curve of genus at least $2$.
 We also compute the algebraic Poincar\'{e} polynomial of the associated cohomology ring.
 \end{abstract}

\maketitle

\section{Introduction}
The Hodge conjecture is one of the outstanding problems of present day mathematics. Although it has been known for over seventy years, the evidence for it has been rather limited (see \cite{voiconj, lewis} for a survey). Recall, any smooth, projective
variety $X$ over $\mb{C}$ admits the Hodge decomposition $H^r(X,\mb{C})=\oplus_i H^{i,r-i}(X, \mb{C})$. 
For $r = 2p$, elements of $H^{p,p}(X, \mb{Z}):=H^{2p}(X, \mb{Z}) \cap H^{p,p}(X, \mb{C})$ are called \emph{Hodge classes}. 
Denote by $Z^{p}(X)$ the free abelian group of algebraic cycles of $X$ of codimension $p$.
Recall, cycle class map  
\[c: Z^{p}(X) \to H^{2p}(X, \mb{Z})\] 
which sends an algebraic cycle to its cohomology class (see \cite[\S $11.1$]{v4}).
Denote by $H^{2p}_A(X, \mb{Z})$ the image of $c$, called the \emph{algebraic cohomology group}.
It is well-known that the cohomology class of any algebraic cycle is Hodge, i.e. $H^{2p}_A(X, \mathbb{Z}) \subset H^{p,p}(X,\mb{Z})$ (see \cite[Proposition $11.20$]{v4}).
The \emph{integral Hodge conjecture} predicts that every integral Hodge class comes from an algebraic cycle
i.e. $H^{2p}_A(X, \mathbb{Z})=H^{p,p}(X,\mb{C}) \cap H^{2p}(X,\mb{Z})$.
Although the conjecture is true for uniruled and Calabi Yau threefolds as well as 
cubic fourfolds (see \cite{voiconj}), the integral 
Hodge conjecture is false in general (see \cite{AH}).
Therefore, we instead consider the rational Hodge conjecture 
i.e. $H^{2p}_A(X, \mathbb{Q}) = H^{2p}(X, \mathbb{Q}) \cap H^{p,p}(X,\mb{C})$, 
where \[H^{2p}_A(X, \mathbb{Q}):=\Ima(c:Z^{p}(X) \otimes \mb{Q} \to H^{2p}(X, \mb{Q})).\]
 
One interesting case where the conjecture holds true is that of the Jacobian, $\mr{Jac}(C)$ of a \emph{very general}, smooth, projective curve $C$ (see \cite[\S $17.5$]{birk}). 
Using this, Balaji-King-Newstead in \cite{bala2} proved the conjecture for the moduli space $M_C(2,\mc{L})$ of rank $2$ semi-stable, 
locally free sheaves with determinant $\mc{L}$ over $C$, for an odd degree invertible 
sheaf $\mc{L}$ on $C$.
 It has also been shown for the moduli space of stable pairs over a smooth, projective curve in \cite{MOS}. However, nothing is known in the case the underlying curve is irreducible, nodal.The goal of this article is to prove the Hodge conjecture for the moduli space of stable rank $2$ bundles with odd degree determinant in the case when the underlying curve is very general, irreducible, nodal. 

 Let $X_0$ be a very general, irreducible, nodal curve with exactly one node, say $x_0$ and $\mc{L}_0$ an invertible sheaf on $X_0$ of odd degree. Here a \emph{very general nodal curve of genus} $g$ means that the normalization, together with the two inverse images of the node, is a \emph{very general} $2$-pointed curve of genus $g-1$. 
 Equivalently, a very general nodal curve lies outside countably many, proper closed subsets of the image of the clutching map from $\mc{M}_{g-1, 2}$ to $\ov{\mc{M}}_{g}$, where $\mc{M}_{g-1,2}$ denotes the moduli space of genus $g-1$ curves with $2$ marked points and $\ov{\mc{M}}_{g}$ is the moduli 
 space of stable curves of genus $g$ (see \cite[Chapter XII, \S $10$]{arb}). Given a torsion-free sheaf $\mc{E}$ on $X_0$, we say that $\mc{E}$ \emph{has determinant} $\mc{L}_0$ if there is an $\mathcal{O}_{X_0}$-morphism $\wedge^2(\mc{E})\to \mc{L}_0$ which is an isomorphism outside the node $x_0$.
 If $\mc{E}$ is locally free then this means $\wedge^2\mc{E}\cong \mc{L}_0$. 
 Using \cite[Theorem $2$]{sun1}, one can check that there exists a moduli space, denoted $U_{X_0}(2,\mc{L}_0)$,
 parameterizing rank $2$, semi-stable sheaves on $X_0$ with determinant $\mc{L}_0$. However, the moduli space $U_{X_0}(2,\mc{L}_0)$ is singular. We show:
 
 \begin{thm}
  For $X_0$ a very general, irreducible nodal curve, there exists a desingularization 
  $\mc{G}_0$ of $U_{X_0}(2,\mc{L}_0)$ (in the sense that $\mc{G}_0$ is non-singular and there is a proper birational morphism from $\mc{G}_0$ to $U_{X_0}(2,\mc{L}_0)$) such that the Hodge conjecture holds for $\mc{G}_0$. 
 \end{thm}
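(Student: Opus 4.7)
My plan is to establish the theorem by a degeneration argument, using the Hodge conjecture for moduli spaces over smooth curves (Balaji–King–Newstead \cite{bala2}) as an input and transferring algebraicity to the nodal degeneration via the machinery of limit mixed Hodge structures.

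First, I would construct a smoothing $\pi\colon \mc{X}\to \Delta$ of $X_0$ over a disc, with smooth generic fibers $X_t$ of genus $g$ specializing to the one-nodal curve $X_0$, together with an extension of $\mc{L}_0$ to a relative line bundle $\mc{L}$ on $\mc{X}$ of odd relative degree. By the results of Nagaraj–Seshadri and Sun (cf.\ \cite{sun1}), this yields a relative moduli space $\mc{U}\to \Delta$ of rank $2$ semistable torsion-free sheaves on fibers of $\pi$ with determinant $\mc{L}$, whose generic fiber is the smooth variety $M_{X_t}(2,\mc{L}_t)$ and whose central fiber is the singular variety $U_{X_0}(2,\mc{L}_0)$. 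The singular locus of the central fiber is concentrated along sheaves that fail to be locally free at $x_0$, and its local structure is well understood in terms of Hecke modifications on the normalization $\widetilde{X}_0$.

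Next, I would produce a simultaneous resolution $\rho\colon \mc{G}\to \mc{U}$ whose total space is smooth, whose generic fiber coincides with $M_{X_t}(2,\mc{L}_t)$, and whose central fiber $\mc{G}_0$ is a smooth model of $U_{X_0}(2,\mc{L}_0)$, possibly after arranging that $\mc{G}_0$ is a simple normal crossings divisor in $\mc{G}$. Here one either appeals to Hironaka in families or, preferably, exploits the explicit Hecke-type description of the singular locus to produce a geometric resolution; the definition of $\mc{G}_0$ in the statement will be this central fiber.

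The heart of the argument is to transfer algebraicity across $\Delta$. Applying Deligne's local invariant cycle theorem to the semistable family $\mc{G}\to \Delta$, every rational cohomology class on $\mc{G}_0$ is the restriction of a monodromy-invariant class on a nearby smooth fiber $\mc{G}_t\cong M_{X_t}(2,\mc{L}_t)$, and the specialization map is compatible with the Hodge filtration on the limit mixed Hodge structure. Consequently every Hodge class on $\mc{G}_0$ is induced by a monodromy-invariant Hodge class on $\mc{G}_t$. By \cite{bala2}, each such class is represented by a rational algebraic cycle on $\mc{G}_t$; taking its flat limit via the specialization map on Chow groups produces an algebraic cycle on $\mc{G}_0$ whose cohomology class realizes the original Hodge class, as required.

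The main obstacle I anticipate lies in the second step, namely controlling the resolution $\mc{G}\to \mc{U}$ sufficiently well that (i) the central fiber $\mc{G}_0$ is globally smooth, and (ii) the specialization map $H^{2p}(\mc{G}_t,\mb{Q})^{T}\twoheadrightarrow H^{2p}(\mc{G}_0,\mb{Q})$ is surjective onto the Hodge part. Both of these reduce to a careful local analysis of $\mc{U}$ near the non-locally-free locus of the central fiber, using the Hecke/parabolic description to identify the exceptional divisors and their Hodge-theoretic contribution to $\mc{G}_0$. Once this is in place, the same picture gives enough information on $H^{*}(\mc{G}_0,\mb{Q})$ to deduce the algebraic Poincaré polynomial announced in the abstract, as a byproduct of the Clemens–Schmid sequence.
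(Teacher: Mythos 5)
Your overall strategy (degenerate to the nodal curve, use limit mixed Hodge structures, feed in the Balaji--King--Newstead result on nearby fibers) is the same as the paper's, but the core of your third step has a genuine gap. The local invariant cycle theorem runs in the opposite direction from the way you use it: it says that the specialization map $H^{k}(\mc{G}_0,\mb{Q})\to H^{k}(\mc{G}(2,\mc{L})_\infty,\mb{Q})^{T}$ is \emph{surjective}, not that every class on the central fiber is the restriction of a monodromy-invariant class from a nearby fiber. The specialization map has a nontrivial kernel here, described by the Clemens--Schmid sequence as the image of $H^{k-2}$ of the double locus under the Gysin map, and this kernel carries Hodge classes that simply do not come from $M_{\mc{X}_s}(2,\mc{L}_s)$. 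Concretely, the algebraic Poincar\'e polynomial of $\mc{G}_0$ is $H(g-1,t)(t^2+t^4)+H(g,t)$, strictly larger than the polynomial $H(g,t)$ of the nearby fiber; the extra summand $H(g-1,t)(t^2+t^4)$ is exactly the contribution your argument cannot see. The paper handles it by using the explicit structure of the Gieseker degeneration: the central fiber is a reduced SNC divisor $\mc{G}_0\cup\mc{G}_1$ with $\mc{G}_1$ a $\mb{P}^3$-bundle and $\mc{G}_0\cap\mc{G}_1$ a $\mb{P}^1\times\mb{P}^1$-bundle over $M_{\widetilde{X}_0}(2,\widetilde{\mc{L}}_0)$, so the kernel classes are controlled by the Hodge conjecture for the moduli space over the normalization (a very general smooth curve of genus $g-1$), which is again Balaji--King--Newstead.

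A second, related problem is your step two. A simultaneous resolution with smooth total space \emph{and} smooth irreducible central fiber equal to a resolution of $U_{X_0}(2,\mc{L}_0)$ cannot exist: such a family would be smooth over all of $\Delta$ and the monodromy would be trivial, whereas the monodromy on $H^3$ of the fibers is nontrivial (it is identified with the nontrivial monodromy on $H^1$ of the degenerating curves via the relative Mumford--Newstead isomorphism). The correct object is the relative Gieseker space of Thaddeus and Sun, whose central fiber is reducible, and then one must pass from the Hodge conjecture for the whole central fiber to the component $\mc{G}_0$. That passage is not automatic: it requires proving that the restriction $j_0^*:H^i(\mc{G}_{X_0}(2,\mc{L}_0),\mb{Q})\to H^i(\mc{G}_0,\mb{Q})$ is surjective, which the paper establishes by a separate argument with relative cohomology, excision and Poincar\'e duality. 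Your proposal acknowledges the need for local control near the non-locally-free locus but does not supply either the reducible model or the surjectivity of $j_0^*$, and without them the argument does not close.
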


 See Theorem \ref{hod02} for a proof.

 One obstacle to simply generalizing the techniques used in the smooth curve case 
 is that an analogous description of the cohomology ring of $U_{X_0}(2,\mc{L}_0)$ is not available. More precisely, Balaji-King-Newstead 
 in \cite{bala2}  prove that there are Hodge classes 
 $\alpha \in H^2(M_C(2,\mc{L}), \mb{Z})$, $\beta \in H^4(M_C(2,\mc{L}), \mb{Z})$ and a surjective morphism
\[H^*(\mr{Jac}(C), \mb{Q}) \otimes \mb{Q}[\alpha, \beta] 
 \rightarrow H^*(M_C(2,\mc{L}), \mb{Q})\] inducing a surjective morphism  $\nu: H^*_A(\mr{Jac}(C), \mb{Q}) \otimes \mb{Q}[\alpha, \beta] 
 \to H^*_A(M_C(2,\mc{L}), \mb{Q}).$ Since the Hodge conjecture 
 holds for $\mr{Jac}(C)$ of a general smooth, projective curve $C$, 
 they are able to conclude the Hodge conjecture for $M_C(2,\mc{L})$. Unfortunately such a morphism does not exist if we 
 replace $M_C(2,\mc{L})$ by $U_{X_0}(2,\mc{L}_0)$. Moreover, since the Jacobian of a nodal curve is not projective, the statement of the 
 Hodge conjecture does not apply to the Jacobian of $X_0$. As a result the classical tools fail in this setup.
 
 Our strategy is as follows. We first embed the very general nodal curve $X_0$ as the central fiber of  a regular, flat family of projective curves $\pi:\mc{X} \to \Delta$ (here $\Delta$ denotes the unit disc), smooth over $\Delta^*:=\Delta\backslash \{0\}$ (see  \cite[Theorem B.$2$]{bake}) for the existence of such a family.
 Note that, the invertible sheaf $\mc{L}_0$ on $X_0$ lifts to a relative invertible sheaf $\mc{L}_{\mc{X}}$ over $\mc{X}$. By \cite{sun1} and \cite{tha}, there exists a relative Gieseker moduli space with fixed determinant over the family $\mc{X}$ given by a flat, projective morphism $\pi_2:\mc{G}(2,\mc{L}) \to \Delta$ such that for all $s \in \Delta^*$, $\mc{G}(2,\mc{L})_s:=\pi_2^{-1}(s)=M_{\mc{X}_s}(2,\mc{L}_s)$. The total space $\mc{G}(2,\mc{L})$ is regular and the central fiber $\pi_2^{-1}(0)$, denoted $\mc{G}_{X_0}(2,\mc{L}_0)$, 
 is a reduced simple normal crossings divisor of $\mc{G}(2,\mc{L})$ with two smooth, irreducible components such that one of them is a desingularization of $U_{X_0}(2,\mc{L}_0)$. 
We denote this desingularization by $\mc{G}_0$.
Since for all $s \neq 0$, $\mc{G}(2,\mc{L})_s = M_{\mc{X}_s}(2,\mc{L}_s)$ and we already know that the Hodge conjecture holds true for $ M_{\mc{X}_s}(2,\mc{L}_s)$ by \cite{bala2}, it is natural to compare the Hodge classes and the algebraic classes on $\mc{G}_{X_0}(2,\mc{L}_0)$ using variation of mixed Hodge structures. We prove that the Hodge conjecture holds for both of the smooth components of the central fibre and therefore for a desingularization of $U_{X_0}(2,\mc{L}_0)$. 


As a by-product we obtain the algebraic Poincar\'{e} polynomial of $\mc{G}_0$.
Recall by \cite[$(5.1)$]{bala2}, we have for any $s \in \Delta^*$, the algebraic Poincar\'{e} polynomial of $M_{\mc{X}_s}(2,\mc{L}_s)$, denoted 
 \[H(g,t):=\sum\limits_i H^i_A(M_{\mc{X}_s}(2,\mc{L}_s)) t^i=\frac{(1-t^g)(1-t^{g+1})(1-t^{g+2})}{(1-t)(1-t^2)(1-t^3)}, \] 
 where $\mc{X}_s:=\pi^{-1}(s)$ and $\mc{L}_s:=\mc{L}_{\mc{X}}|_{\mc{X}_s}$. Analogously, we prove (Theorems \ref{hod02} and \ref{cor11}): 
\begin{thm}\label{th11}
  The algebraic Poincar\'{e} polynomial for $\mc{G}_0$
 is given by \[P_A(\mc{G}_0):= \sum\limits_i H^i_A(\mc{G}_0) t^i = H(g-1,t)(t^2+t^4)+H(g,t).\] 
 \end{thm}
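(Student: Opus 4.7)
The plan is to derive the formula as a direct by-product of Theorem \ref{hod02}. Once Theorem \ref{hod02} establishes the Hodge conjecture for $\mc{G}_0$, we have $\dim H^{2p}_A(\mc{G}_0,\mb{Q})=h^{p,p}(\mc{G}_0)$ for every $p$, so the task is reduced to computing the $(p,p)$-Hodge numbers of $\mc{G}_0$.

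The main input is the description of the central fibre of the Gieseker degeneration $\pi_2:\mc{G}(2,\mc{L})\to \Delta$ as the reduced SNC divisor $\mc{G}_{X_0}=\mc{G}_0\cup \mc{G}_1$, with double locus $D:=\mc{G}_0\cap \mc{G}_1$. By \cite{sun1,tha}, the component $\mc{G}_1$ and the intersection $D$ admit explicit descriptions as projective bundles over smaller moduli spaces attached to the normalization $\widetilde{X}_0$, which is a very general smooth projective curve of genus $g-1$. Since the Hodge conjecture is known for these moduli spaces by \cite{bala2,bishod}, the projective bundle formula yields $P_A(\mc{G}_1)$ and $P_A(D)$ as explicit multiples of $H(g-1,t)$.

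To extract $P_A(\mc{G}_0)$, I would combine two classical tools. First, the Mayer--Vietoris long exact sequence of mixed Hodge structures associated to $\mc{G}_{X_0}=\mc{G}_0\cup \mc{G}_1$ (a special instance of Deligne's weight spectral sequence for SNC varieties) relates the $(p,p)$-Hodge numbers of $\mc{G}_0$ to those of $\mc{G}_1$, $D$, and $\mc{G}_{X_0}$. Second, the Clemens--Schmid exact sequence together with the local invariant cycle theorem, applied to the proper family $\pi_2$, identifies the monodromy-invariant classes of the nearby fibre $M_{\mc{X}_s}(2,\mc{L}_s)$ with classes in $H^{*}(\mc{G}_{X_0},\mb{Q})$. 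Because the arguments in Theorem \ref{hod02} show that for very general $X_0$ the algebraic classes of $M_{\mc{X}_s}$ survive specialization, the piece of $H^{p,p}(\mc{G}_0)$ coming from the nearby fibre contributes the summand $H(g,t)$, while the remaining contribution from $\mc{G}_1$ and $D$ (which enters the Mayer--Vietoris sequence twisted by the Tate shift $\mb{Q}(-1)$ via Gysin) produces the summand $H(g-1,t)(t^2+t^4)$.

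The principal technical step is verifying that, on the $(p,p)$-part of the Mayer--Vietoris sequence, the Gysin morphism $H^{2p-2}(D,\mb{Q})(-1)\to H^{2p}(\mc{G}_0,\mb{Q})\oplus H^{2p}(\mc{G}_1,\mb{Q})$ and the associated restriction morphism have the expected ranks, so that the alternating sum in the sequence produces exactly the factor $(t^2+t^4)H(g-1,t)$ beyond the generic fibre contribution $H(g,t)$. This compatibility relies on the explicit geometric descriptions of $\mc{G}_1$ and $D$ supplied by \cite{sun1,tha} and is implicit in the proof of Theorem \ref{hod02}; collecting these pieces yields the formula.
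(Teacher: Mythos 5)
Your opening reduction is incorrect: the Hodge conjecture for $\mc{G}_0$ gives $H^{2p}_A(\mc{G}_0,\mb{Q})=H^{2p}(\mc{G}_0,\mb{Q})\cap H^{p,p}(\mc{G}_0,\mb{C})$, i.e.\ it identifies the algebraic classes with the \emph{rational} Hodge classes, not with all of $H^{p,p}$. The equality $\dim_{\mb{Q}}H^{2p}_A(\mc{G}_0,\mb{Q})=h^{p,p}(\mc{G}_0)$ is false here: already for the nearby fibre $M_{\mc{X}_s}(2,\mc{L}_s)$ the group $H^6$ contains the image of $\wedge^2 H^3\cong\wedge^2 H^1(\mc{X}_s)$, whose $(3,3)$-part has dimension of order $g^2$, while for a very general curve the rational Hodge classes there are spanned by the single theta-type class (this is exactly why \cite{bala2} get the small polynomial $H(g,t)$ rather than $\sum_p h^{p,p}t^{2p}$). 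So ``computing the $(p,p)$-Hodge numbers of $\mc{G}_0$'' is the wrong task and would produce a strictly larger polynomial; the very-generality of $\widetilde{X}_0$ is what collapses $H^*_{\mr{Hdg}}$ down to $H^*_A$, and you must track rational Hodge (equivalently algebraic) classes, not Hodge numbers, through every sequence you use.

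Setting that aside, the remainder of your outline is essentially the route the paper takes: the specialization sequence restricted to algebraic classes gives $P_A(\mc{G}_{X_0}(2,\mc{L}_0))=P_A(\mc{G}_0\cap\mc{G}_1)\,t^2+H(g,t)-H(g-1,t)\,t^4$, and the surjectivity of $j_0^*$ from Theorem \ref{hod02} combined with Mayer--Vietoris gives $P_A(\mc{G}_0)=P_A(\mc{G}_{X_0}(2,\mc{L}_0))-P_A(M_{\widetilde{X}_0}(2,\widetilde{\mc{L}}_0))\,t^6$. However, the ``expected ranks'' you defer to are precisely where the coefficients come from and are not automatic: you need (i) $\ker f_{2i}\cong H^{2i-4}_A(M_{\widetilde{X}_0}(2,\widetilde{\mc{L}}_0))(-2)$, which follows from the explicit kernel computation $\ker(i_{1,*},i_{2,*})\cong H^{*-4}(M_{\widetilde{X}_0}(2,\widetilde{\mc{L}}_0))(\xi_1\oplus-\xi_2)$ of Proposition \ref{ner15}, and (ii) $\ker\bigl(j_0^*|_{H^{2i}_A}\bigr)\cong\ker i_2^*\cap H^{2i}_A(\mc{G}_1)\cong H^{2i-6}_A(M_{\widetilde{X}_0}(2,\widetilde{\mc{L}}_0))(\xi'')^3$, which uses the comparison of the $\mb{P}^3$- and $\mb{P}^1\times\mb{P}^1$-bundle decompositions. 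Without these two identifications the factor $t^2+t^4$ (as opposed to, say, $t^2+t^4+t^6$ or $t^2+2t^4+t^6$) cannot be pinned down, so they need to be made explicit rather than cited as implicit in Theorem \ref{hod02}.
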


 We note that this article is part of a series of articles in which we study related but different questions pertaining to the moduli space of stable, rank $2$ sheaves on an irreducible nodal curve (see \cite{DK2, mumf, indpre, genpre}). The answers to these questions is well-known in the case when the underlying curve is smooth. Therefore we have often employed the theory of limit mixed Hodge structures to study the question for the nodal curve case using analogous results known for the case when the curve is smooth. However, the results in these articles are independent and overlap only in the background material.    
 
{\bf{Notation:}} Given any morphism $f:\mc{Y} \to S$ and a point $s \in S$, we denote by $\mc{Y}_s:=f^{-1}(s)$.
 The open unit disc is denoted by $\Delta$ and $\Delta^*:=\Delta\backslash \{0\}$ denotes the punctured disc.
 Unless mentioned otherwise, all cohomology groups are taken with $\mb{Q}$-coefficients.
 
 \emph{Acknowledgements} 
We thank Dr. S. Basu for numerous discussions. We are also grateful to the referee for his comments. 
The first author is funded by EPSRC grant number R/$162871-11-1$ and the second author
by the DFG, TRR $326$
Geometry and Arithmetic of Uniformized Structures, project number $444845124$.
Part of the work was done when the second author was funded by a PNPD fellowship from CAPES, Brazil.

\section{Preliminaries: Limit mixed Hodge structures}\label{sec3}
    We now briefly review the basics of limit mixed Hodge structures. Since the theory of limit mixed Hodge structures are used just as a tool, we only state definitions and results (without proof) relevant to our setup. For a detailed treatment of the subject see \cite{pet}.

\begin{note}
 Let $\rho:\mc{Y} \to \Delta$ be a flat, projective
 family of projective varieties, smooth over $\Delta^*$,
 $\rho':\mc{Y}_{\Delta^*} \to \Delta^*$ the restriction of $\rho$ to $\Delta^*$. 
 \end{note}
 
\begin{defi} 
 By Ehresmann's lemma (see \cite[Theorem $9.3$]{v4}), for all $i \ge 0$, $\mb{H}_{\mc{Y}_{\Delta^*}}^i:=R^i{\rho}'_{*}\mb{Z}$
 is a local system over $\Delta^*$ with fiber $H^i(\mc{Y}_t,\mb{Z})$, for $t \in \Delta^*$.
 One can associate to these local systems, the holomorphic vector bundles 
 $\mc{H}_{\mc{Y}_{\Delta^*}}^i:=\mb{H}_{\mc{Y}_{\Delta^*}}^i \otimes_{\mb{Z}} \mo_{\Delta^*}$
 called the \emph{Hodge bundle}.
 There exist holomorphic sub-bundles $F^p\mc{H}_{\mc{Y}_{\Delta^*}}^i \subset \mc{H}_{\mc{Y}_{\Delta^*}}^i$
defined by the condition: for any $t \in \Delta^*$, the fibers $\left(F^p\mc{H}_{\mc{Y}_{\Delta^*}}^i\right)_t \subset \left(\mc{H}_{\mc{Y}_{\Delta^*}}^i\right)_t$
can be identified respectively with 
$F^pH^i(\mc{Y}_t,\mb{C}) \subset H^i(\mc{Y}_t,\mb{C})$.
 where $F^p$ denotes the Hodge filtration (see \cite[\S $10.2.1$]{v4}).
  \end{defi}
  
 In order to define a mixed Hodge structure on the family $\rho:\mc{Y} \to \Delta$, the Hodge bundles and their holomorphic sub-bundles need to be extended to the entire disc.  By  \cite[Definition $11.4$]{pet}
 there exists a canonical extension $\ov{\mc{H}}_{\mc{Y}}^i$ of 
 ${\mc{H}}_{\mc{Y}_{\Delta^*}}^i$ to $\Delta$ .
 Note that, $\ov{\mc{H}}_{\mc{Y}}^i$ is locally-free over $\Delta$. Denote by $j:\Delta^* \to \Delta$ the inclusion morphism, then the Hodge filtration $F^p$ on $\Delta^*$ is extended to $\Delta$ by setting 
 $F^p\ov{\mc{H}}_{\mc{Y}}^i:= j_*\left(F^p\mc{H}_{\mc{Y}_{\Delta^*}}^i\right) \cap  \ov{\mc{H}}_{\mc{Y}}^i$.
 Note that, $F^p\ov{\mc{H}}_{\mc{Y}}^i$ is the locally-free
 sub-sheaf of $\ov{\mc{H}}_{\mc{Y}}^i$ which extends $F^p\mc{H}_{\mc{Y}_{\Delta^*}}^i$.
 
 \begin{defi}\label{defiunc}
 Consider the universal cover $\mf{h} \to \Delta^*$ of the punctured unit disc. 
 Denote by $e:\mf{h} \to \Delta^* \xrightarrow{j} \Delta$ the composed morphism and define by  
 \[\mc{Y}_\infty:=\mc{Y} \times_{\Delta} \mf{h}\] the base change of the family $\mc{Y}$ over $\Delta$ to $\mf{h}$, by the morphism $e$.
   
   By \cite[XI-$8$]{pet}, for a choice of the parameter $t$ on $\Delta$, the central fiber of the canonical extension $\ov{\mc{H}}_{\mc{Y}}^i$ can be identified with the cohomology group $H^i(\mc{Y}_{\infty},\mb{C})$:  
 \begin{equation}\label{tor23}
  g^i_{_t}:H^i(\mc{Y}_{\infty},\mb{C}) \xrightarrow{\sim} \left(\ov{\mc{H}}_{\mc{Y}}^i\right)_0.
 \end{equation}
  As a consequence, there exist Hodge filtrations on $H^i(\mc{Y}_{\infty},\mb{C})$ defined by
\[F^pH^i(\mc{Y}_{\infty},\mb{C}):=(g_{_t}^i)^{-1}\left(F^p\ov{\mc{H}}_{\mc{Y}}^i\right)_0.\]
\end{defi}

To define a weight filtration on $H^i(\mc{Y}_{\infty},\mb{Z})$, we use the local monodromy transformations. 

\begin{defi}
For any $s \in \Delta^*$ and $i \ge 0$, denote by 
\[T_{s,i}: H^i(\mc{Y}_s,\mb{Z}) \to H^i(\mc{Y}_s,\mb{Z}) \, \mbox{ and }\, T_{s,i}^{\mb{Q}}: H^i(\mc{Y}_s) \to H^i(\mc{Y}_s)\]
the \emph{local monodromy transformations} associated to the local system $\mb{H}_{\mc{Y}_{\Delta^*}}^i$,
   defined by parallel transport along a counterclockwise loop about $0 \in \Delta$ (see \cite[\S $11.1.1$]{pet}).
   By Ehresmann's lemma, the cohomology group $H^i(\mc{Y}_{\infty})$ is (canonically)
   isomorphic to $H^i(\mc{Y}_s)$, depending only on the choice of 
   $s' \in \mf{h}$ with $e(s')=s$, 
   induced by the inclusion of $\mc{Y}_{s'}$ into 
   $\mc{Y}_\infty$. Then using the translation $s' \mapsto s'+1$, the automorphism
   $T_{s,i}^{\mb{Q}}$ induces a $\mb{Q}$-automorphism 
   \begin{equation}\label{int01}
 T_i: H^i(\mc{Y}_{\infty}) \to H^i(\mc{Y}_{\infty}).
\end{equation}
 See \cite[\S II.$2.4$]{kuli} or \cite[Theorem II.$1.17$]{deli2} for more details.
\end{defi}

We can now define a mixed Hodge structure on $H^i(\mc{Y}_{\infty},\mb{Z})$. 

  \begin{defi}\label{tor25}\label{defimhs}
  Let $N_i$ be the logarithm of the monodromy operator $T_i$.
  By \cite[Lemma-Definition $11.9$]{pet}, there exists an unique increasing \emph{monodromy weight filtration} $W_\bullet$ on $H^i(\mc{Y}_\infty)$ such that,
 \begin{enumerate}
  \item  for $j \ge 2$, $N_i(W_jH^i(\mc{Y}_\infty)) \subset W_{j-2}H^i(\mc{Y}_\infty)$ and
  \item the map $N_i^l: \mr{Gr}^W_{i+l} H^i(\mc{Y}_\infty) \to \mr{Gr}^W_{i-l} H^i(\mc{Y}_\infty)$ 
  is an isomorphism for all $l \ge 0$.
   \end{enumerate}
  By \cite[Theorem $6.16$]{schvar} the induced filtrations on  $H^i(\mc{Y}_{\infty},\mb{C})$ define a mixed Hodge structure $(H^i(\mc{Y}_{\infty},\mb{Z}),W_\bullet,F^\bullet)$, called the \emph{limit mixed Hodge structure} on $H^i(\mc{Y}_{\infty},\mb{Z})$.
  \end{defi}

  \begin{defi} 
 Recall, for any $t \in \Delta^*$, there exist natural specialization
 morphisms:
 \[\mr{sp}_i:H^i(\mc{Y}_0) \to H^i(\mc{Y}_t) \]
 obtained by composing the natural inclusion of the special fiber
 $\mc{Y}_t$  into 
 $\mc{Y}$ with the retraction map
 to the central fiber $\mc{Y}_0$. Unfortunately, the resulting specialization maps are not morphism of mixed Hodge structures. However, if one identifies
 $H^i(\mc{Y}_t)$ with $H^i(\mc{Y}_\infty)$ as mentioned above, then the resulting 
 (modified) specialization morphisms are morphisms of mixed Hodge structures (see \cite[Theorem $11.29$]{pet}).
  Furthermore, by the local invariant cycle theorem \cite[Theorem $11.43$]{pet}, we have the following exact sequence:
 \begin{align}
 &H^i(\mc{Y}_0) \xrightarrow{\mr{sp}_i} H^i(\mc{Y}_{\infty}) \xrightarrow{T_{\mc{Y}} - \mr{Id}} H^i(\mc{Y}_{\infty}) 
 \end{align}
 where $\mr{sp}_i$ denotes the (modified) specialization morphism which is a morphism of mixed Hodge structures
as discussed above.
\end{defi}

Suppose that $\mc{Y}_0$ is a simple, 
 normal crossings divisor and an union of two smooth, 
 irreducible components, say $Y_1, Y_2$. 
 Since $Y_1 \cap Y_2, Y_1, Y_2$ are non-singular (hence has pure Hodge structure), 
  the  Mayer-Vietoris sequence associated to $\mc{Y}_0$, 
  gives rise to the exact sequence:
  \begin{equation}\label{eq:viet}
   0 \to \mr{Gr}^W_i H^i(\mc{Y}_0) \to H^i(Y_1) \oplus H^i(Y_2)
   \xrightarrow{r_1-r_2} H^i(Y_1 \cap Y_2), 
  \end{equation}
  where $r_j$ is the restriction morphism from $H^i(Y_j)$ to $H^i(Y_1 \cap Y_2)$
  for $j=1,2$.
  Note that the Gysin morphism from $H^{i-2}(Y_1 \cap Y_2)(-1)$
  to $H^i(Y_1) \oplus H^i(Y_2)$ composed with $(r_1-r_2)$ is the zero map
  (see \cite[Proposition B.$30$]{pet}).
 Hence, $H^i(Y_1 \cap Y_2)$ factors through $\mr{Gr}^W_i H^i(\mc{Y}_0)$.
 Denote by 
  \begin{equation}\label{eq:ntor03}
   f_i: H^{i-2}(Y_1 \cap Y_2)(-1) \to \mr{Gr}^W_i H^i(\mc{Y}_0) \hookrightarrow H^i(\mc{Y}_0)
  \end{equation}
  the composed morphism.
  We recall the following useful result which we use repeatedly.
 
 \begin{cor}\label{ntor03}
   Suppose that the central fiber $\mc{Y}_0$ is a simple, 
 normal crossings divisor and an union of two smooth, 
 irreducible components, say $Y_1, Y_2$. Then,  
 $W_{i+1} H^i(\mc{Y}_\infty) = H^i(\mc{Y}_\infty)$ and 
  we have the following exact sequence of mixed Hodge structures:
  \begin{equation}\label{ntor02}
 H^{i-2}(Y_1 \cap Y_2)(-1) \xrightarrow{f_i} H^i(\mc{Y}_0) \xrightarrow{\mr{sp}_i} H^i(\mc{Y}_\infty) \xrightarrow{g_i} \mr{Gr}^W_{i+1} H^i(\mc{Y}_\infty) \to 0,
  \end{equation}
where $f_i$ is the morphism \eqref{eq:ntor03}
and $g_i$ is the natural quotient by $W_{i} H^i(\mc{Y}_\infty)$.
 \end{cor}
 
 \begin{proof}
  See \cite[Corollary $2.4$]{mumf} for a proof.
 \end{proof}

   \section{Limit mixed Hodge structures of the relative Gieseker moduli space}\label{sec2}
 
 In this section we recall the basic definitions and results on the relative Gieseker moduli space,
 the associated monodromy action and extend the exact sequence \eqref{ntor02} in this setup.
 
  \subsection{Notation}\label{tor33}
 Let $X_0$ be an irreducible nodal curve of genus $g \ge 2$, with exactly
 one node, say at $x_0$. Denote by $\pi_0:\widetilde{X}_0 \to X_0$ the normalization map.
 There exists a flat, projective family of curves
  $\pi_1:\mc{X} \to \Delta$ smooth over 
   $\Delta^*$ and central fiber isomorphic to $X_0$ (see \cite[Theorem B$.2$]{bake}).
   Moreover, $\mc{X}$ is a regular variety.
   Fix an invertible sheaf $\mc{L}$ on $\mc{X}$ such that $\mc{L}|_{\mc{X}_s}$ is of 
   odd degree, say $d$ for all $s \in \Delta$. Set $\mc{L}_0:=\mc{L}|_{X_0}$, 
   the restriction of $\mc{L}$ to the central fiber.
   Denote by $\widetilde{\mc{L}}_0:=\pi_0^*(\mc{L}_0)$.

 \subsection{Relative Gieseker moduli space}\label{subsec1}
 Recall, that a curve $X_k$ \emph{is semi-stably equivalent to} $X_0$ if it is the union of the normalization $\widetilde{X}_0$ and a chain of rational curves
 of length $k$. See \cite[Definition-Notation 2]{nagsesh} for the precise definition. 
 We say that a family of curves $f:\mc{X}_S \to S$ \emph{is semi-stably equivalent to the family} $\pi_1$ above, if 
 \begin{enumerate}
  \item there is a morphism from $\mc{X}_S$ to $\mc{X}$ inducing a morphism $h$ from $S$ to $\Delta$ such that
  the resulting diagram is commutative,
  \item for any point $s$ mapping to $0 \in \Delta$, the fiber $f^{-1}(s)$ is semi-stably equivalent to $X_0$. For other points $s \in S$, 
  the fiber $f^{-1}(s)$ is isomorphic to the fiber of $\pi_1$ over $h^{-1}(s)$.
 \end{enumerate}
 There exists a relative moduli space, called the \emph{relative Gieseker moduli space}, denoted $\mc{G}(2,\mc{L})$ which parametrizes certain rank $2$, determinant $\mc{L}$
 semi-stable sheaves defined over families of curves semi-stably equivalent to the family $\pi_1$. See \cite[\S $3$]{sun1} or \cite[\S $6$]{tha}
 for the precise definitions.
 By \cite[Theorem $2$]{sun1}, there exists a flat, projective morphism $\pi_2:\mc{G}(2,\mc{L}) \to \Delta$ such that for all $s \in \Delta^*$, $\mc{G}(2,\mc{L})_s:=\pi_2^{-1}(s)=M_{\mc{X}_s}(2,\mc{L}_s)$ 
 (see also \cite[\S $5$ and $6$]{abe}).
 Moreover, $\mc{G}(2,\mc{L})$ is regular and the central fiber $\pi_2^{-1}(0)$, denoted $\mc{G}_{X_0}(2,\mc{L}_0)$, 
 is a reduced simple normal crossings divisor of $\mc{G}(2,\mc{L})$ (see \cite[\S $6$]{tha}).

   Denote by $M_{\widetilde{X}_0}(2,\widetilde{\mc{L}}_0)$ the fine moduli space of semi-stable sheaves of rank $2$ and with 
  determinant $\widetilde{\mc{L}}_0$ over $\widetilde{X}_0$ (see \cite[Theorem $4.3.7$ and $4.6.6$]{huy}).
  By \cite[$(6.2)$]{tha}, $\mc{G}_{X_0}(2,\mc{L}_0)$ can be written as the union of two irreducible components, say $\mc{G}_0$ and $\mc{G}_1$, where 
  $\mc{G}_1$ (resp. $\mc{G}_0 \cap \mc{G}_1$) is isomorphic to a $\p3$ (resp. $\mb{P}^1 \times \mb{P}^1$)-bundle
  over $M_{\widetilde{X}_0}(2,\widetilde{\mc{L}}_0)$. Note that, there is a proper morphism $\theta:\mc{G}_{X_0}(2,\mc{L}_0) \to U_{X_0}(2,\mc{L}_0)$
     with the irreducible component $\mc{G}_0$ mapping surjectively to $U_{X_0}(2,\mc{L}_0)$ (use \cite[Theorem $3.7$]{sun1} and \cite[\S $6$]{tha}).
     In fact, the restriction of $\theta$ to $\mc{G}_0$ is a birational morphism. Since $\mc{G}_0$ is non-singular, it 
     is a desingularization of $U_{X_0}(2,\mc{L}_0)$ (see \cite{tha, sun1}). Moreover, there exists an $\ov{\mr{SL}}_2$-bundle over $M_{\widetilde{X}_0}(2,\widetilde{\mc{L}}_0)$, 
  denoted $P_0$, and closed subschemes $Z \subset P_0$, $Z' \subset \mc{G}_0$ 
  such that $P_0 \backslash Z \cong \mc{G}_0 \backslash Z'$ and 
  $\emptyset = Z' \cap \mc{G}_1$ (see \cite[p. $27$]{tha}),
   where $\ov{\mr{SL}}_2$ is the \emph{wonderful compactification} of $\mr{SL}_2$ defined as 
    \[\ov{\mr{SL}}_2:= \{[M,\lambda] \in \mb{P}(\mr{End}(\mb{C}^2) \oplus \mb{C})| \det(M)=\lambda^2\}\] 
    (see \cite[Definition $3.3.1$]{pezz} for a general definition of wonderful compactification).

 \subsection{Monodromy action on the relative Gieseker moduli space}\label{sec:mon}
An important step in proving the Hodge conjecture for $\mc{G}_0$ is to study the 
 monodromy action on $H^{2i}(\mc{G}(2,\mc{L})_s)$
 for $s \in \Delta^*$. For this purpose, we need to consider the relative version of the construction of the Mumford-Newstead isomorphism \cite{mumn}.
  Denote by  \[\mc{W}:=\mc{X}_{\Delta^*} \times_{\Delta^*} \mc{G}(2,\mc{L})_{\Delta^*}\, \mbox{ and }\, \pi_3: \mc{W} \to \Delta^*\] the natural morphism. We set $\mc{W}_t:=\pi_3^{-1}(t) \cong \mc{X}_t \times M_{\mc{X}_t}(2,\mc{L}_t)$ for all $t \in \Delta^*$.
  There exists a (relative) universal bundle $\mc{U}$ over $\mc{W}$ 
  associated to the (relative) moduli space $\mc{G}(2,\mc{L})_{\Delta^*}$. Note that $\mc{U}$ is only well-defined up to pull-back by a line bundle on $\mc{G}(2,\mc{L})_{\Delta^*}$. For each $t \in \Delta^*$,
  $\mc{U}|_{\mc{W}_t}$ is a universal bundle over $\mc{X}_t \times M_{\mc{X}_t}(2,\mc{L}_t)$ associated to fine moduli space $M_{\mc{X}_t}(2,\mc{L}_t)$
  (use \cite[Theorem $9.1.1$]{pan}).
  Let $\mb{H}^4_{\mc{W}}:=R^4 \pi_{3_*} \mb{Z}_{\mc{W}}$ be the local system associated to $\mc{W}$.
  By K\"{u}nneth decomposition, we have 
  \begin{equation}
   \mb{H}^4_{\mc{W}}= \bigoplus\limits_i \left(\mb{H}^i_{\mc{X}_{\Delta^*}} \otimes \mb{H}^{4-i}_{\mc{G}(2,\mc{L})_{\Delta^*}}\right).
  \end{equation}
  Denote by $c_2(\mc{U})^{1,3} \in \Gamma\left(\mb{H}^1_{\mc{X}_{\Delta^*}} \otimes \mb{H}^{3}_{\mc{G}(2,\mc{L})_{\Delta^*}}\right)$ 
  the image of the second Chern class $c_2(\mc{U}) \in \Gamma(\mb{H}^4_{\mc{W}})$ under the natural projection from 
  $\mb{H}^4_{\mc{W}}$ to $\mb{H}^1_{\mc{X}_{\Delta^*}} \otimes \mb{H}^{3}_{\mc{G}(2,\mc{L})_{\Delta^*}}$.
  Using Poincar\'{e} duality applied to the local system $\mb{H}^1_{\mc{X}_{\Delta^*}}$, we have
\begin{equation}\label{ner08}
 \mb{H}^1_{\mc{X}_{\Delta^*}} \otimes \mb{H}^{3}_{\mc{G}(2,\mc{L})_{\Delta^*}} \stackrel{\mr{PD}}{\cong}\left(\mb{H}^1_{\mc{X}_{\Delta^*}}\right)^\vee \otimes \mb{H}^{3}_{\mc{G}(2,\mc{L})_{\Delta^*}} \cong \Hc\left(\mb{H}^1_{\mc{X}_{\Delta^*}}, \mb{H}^{3}_{\mc{G}(2,\mc{L})_{\Delta^*}}\right).
\end{equation}
  Therefore, $c_2(\mc{U})^{1,3}$ induces a homomorphism
  $\Phi_{\Delta^*}: \mb{H}^1_{\mc{X}_{\Delta^*}} \to \mb{H}^{3}_{\mc{G}(2,\mc{L})_{\Delta^*}}$.
  Note that, the restriction of the universal bundle $\mc{U}$ to the fiber over any point of 
  the moduli space $\mc{G}(2,\mc{L})_{\Delta^*}$ is uniquely determined by the point (and 
  not by the choice of the universal bundle). Hence, the value of the section $c_2(\mc{U})$
  at each point of $\Delta^*$ is independent of the choice of $\mc{U}$. Since a (local) section of 
  a local system is uniquely determined by its value at a point, the section $c_2(\mc{U})$
  does not depend on the choice of the universal bundle. As a result, the homomorphism $\Phi_{\Delta^*}$ is independent of the choice of $\mc{U}$.
  By \cite[Lemma $1$ and Proposition $1$]{mumn}, we conclude that the homomorphism $\Phi_{\Delta^*}$
  is an isomorphism such that the induced 
  isomorphism on the associated vector bundles:
  \[\Phi_{\Delta^*}:\mc{H}^1_{\mc{X}_{\Delta^*}} \xrightarrow{\sim} \mc{H}^{3}_{\mc{G}(2,\mc{L})_{\Delta^*}} \mbox{ satisfies } \Phi_{\Delta^*}(F^p\mc{H}^1_{\mc{X}_{\Delta^*}})= F^{p+1}\mc{H}^{3}_{\mc{G}(2,\mc{L})_{\Delta^*}} \mbox{ for all }p \ge 0.\]
  Denote by 
   \begin{equation}\label{eq11}
    \widetilde{\Phi}_s: H^1(\mc{X}_s, \mb{Z}) \xrightarrow{\sim} H^3(\mc{G}(2,\mc{L})_s, \mb{Z})
   \end{equation}
   the restriction of $\Phi_{\Delta^*}$ to the point $s \in \Delta^*$. 
    Since $c_2(\mc{U})^{1,3}$ is a (single-valued) global section of $\mb{H}^1_{\mc{X}_{\Delta^*}} \otimes \mb{H}^{3}_{\mc{G}(2,\mc{L})_{\Delta^*}}$,
   we have 
   \[\widetilde{\Phi}_s \in \mr{Hom}(H^1(\mc{X}_s, \mb{Z}), H^3(\mc{G}(2,\mc{L})_s, \mb{Z})) \cong H^1(\mc{X}_s, \mb{Z})^\vee \otimes H^3(\mc{G}(2,\mc{L})_s, \mb{Z}) \stackrel{\mr{P.D.}}{\cong}\]\[\stackrel{\mr{P.D.}}{\cong}
           H^1(\mc{X}_s, \mb{Z}) \otimes H^3(\mc{G}(2,\mc{L})_s, \mb{Z}) 
           \]
is monodromy invariant i.e., for all $s \in \Delta^*$, the following diagram is commutative:
   \begin{equation}\label{ner03}
    \begin{diagram}
     H^1(\mc{X}_s, \mb{Z}) &\rTo^{\widetilde{\Phi}_s}_{\sim}& H^3(\mc{G}(2,\mc{L})_s, \mb{Z}) \\
  \dTo^{T_{\mc{X}_s}}_{\cong}&\circlearrowleft&\dTo_{T_{\mc{G}(2,\mc{L})_s}}^{\cong}\\
    H^1(\mc{X}_s, \mb{Z}) &\rTo^{\widetilde{\Phi}_s}_{\sim}&H^3(\mc{G}(2,\mc{L})_s, \mb{Z})  
       \end{diagram}
   \end{equation}
 where $T_{\mc{X}_s}$ and $T_{\mc{G}(2,\mc{L})_s}$ are the monodromy transformations on $H^1(\mc{X}_s, \mb{Z})$
 and $H^3(\mc{G}(2,\mc{L})_s, \mb{Z})$, respectively.

 \subsection{Limit Mumford-Newstead isomorphism}\label{sec:limmum} 
 Let $\ov{\mc{H}}^1_{\mc{X}}$ and $\ov{\mc{H}}^{3}_{\mc{G}(2,\mc{L})}$ be the canonical extensions of $\mc{H}^1_{\mc{X}_{\Delta^*}}$
  and $\mc{H}^{3}_{\mc{G}(2,\mc{L})_{\Delta^*}}$, respectively. Then, the morphism $\Phi_{\Delta^*}$ extends to the entire disc (by canonicity of the extensions):
  \[\widetilde{\Phi}:\ov{\mc{H}}_{\mc{X}}^1 \xrightarrow{\sim} \ov{\mc{H}}^3_{\mc{G}(2,\mc{L})}.\]
  Using the identification \eqref{tor23} and restricting $\widetilde{\Phi}$ to the central fiber, we have an isomorphism:
  \begin{equation}\label{ner11}
   \widetilde{\Phi}_0 : H^1(\mc{X}_\infty) \xrightarrow{\sim} H^3(\mc{G}(2,\mc{L})_\infty),
  \end{equation}
  where $\mc{X}_\infty$ (resp. $\mc{G}(2,\mc{L})_{\infty}$) is the base change of $\mc{X}$ (resp. 
  $\mc{G}(2, \mc{L})$) to the upper half plane $\mf{h}$ as described in Definition \ref{defiunc}.
   Recall, $\widetilde{\Phi}_0$ is an isomorphism of mixed Hodge structures:
   
  \begin{thm}\label{ner01}
   For the extended morphism $\widetilde{\Phi}$, we have 
   $\widetilde{\Phi}(F^p\ov{\mc{H}}_{\mc{X}}^1)= F^{p+1}\ov{\mc{H}}^3_{\mc{G}(2,\mc{L})}$  for  $p=0,1$  and  
   $\widetilde{\Phi}(\ov{\mb{H}}_{\mc{X}}^1)=\ov{\mb{H}}^3_{\mc{G}(2,\mc{L})}$.
   Moreover, $\widetilde{\Phi}_0(W_iH^1(\mc{X}_\infty))=W_{i+2}H^3(\mc{G}(2,\mc{L})_\infty)$
   for all $i \ge 0$.
  \end{thm}

  \begin{proof}
   See \cite[Proposition $4.1$]{indpre} for a proof of the statement.
 \end{proof}

 \subsection{Leray-Hirsch cohomology decomposition}\label{note:ner01}
 We now write the cohomology groups of $\mc{G}_1$ and $\mc{G}_0 \cap \mc{G}_1$ in terms of that of $M_{\widetilde{X}_0}(2,\widetilde{\mc{L}}_0)$,
 using the cohomology computations of the fibers of $\mc{G}_1$ and $\mc{G}_0 \cap \mc{G}_1$ over $M_{\widetilde{X}_0}(2,\widetilde{\mc{L}}_0)$. 
 
 \begin{note}\label{note:inc}
  Let $\xi_0$ be a generator of $H^2(\mb{P}^1)$, $\pr_i$ the natural 
projections from $\mb{P}^1 \times \mb{P}^1$ to $\mb{P}^1$ and $\xi_i:=\pr_i^*(\xi_0)$.
Using the Künneth decomposition, we have $H^{2i+1}(\mb{P}^1 \times \mb{P}^1)=0$ for $i \ge 0$,
\[H^0(\mb{P}^1 \times \mb{P}^1)= \mb{Q},\,     
H^2(\mb{P}^1 \times \mb{P}^1)= \mb{Q}\xi_1 \oplus \mb{Q}\xi_2\mbox{ and } H^4(\mb{P}^1 \times \mb{P}^1)=\mb{Q} \xi_1.\xi_2.\]
In other words, the cohomology ring 
$H^*(\mb{P}^1 \times \mb{P}^1)=\mb{Q}[\xi_1,\xi_2]/(\xi_1^2,\xi_2^2)$. Recall, the cohomology ring 
$H^*(\mb{P}^3)=\mb{Q}[\xi'']/((\xi'')^4)$, where $\xi'' \in H^2(\mb{P}^3, \mb{Z})$ is a generator.
Moreover, $\mb{P}^1 \times \mb{P}^1$ can be embedded (via Segre embedding)
as a smooth quadric surface in $\p3$ and under the induced pull-back morphism 
of (graded) cohomology rings:
\[\mb{Q}[\xi'']/((\xi'')^4)=H^*(\p3) \to H^*(\mb{P}^1 \times \mb{P}^1) = \mb{Q}[\xi_1,\xi_2]/(\xi_1^2,\xi_2^2),\]
$\xi''$ is mapped to $\xi_1+\xi_2$.
 Denote by 
  \[i_0: \mc{G}_0 \cap \mc{G}_1 \hookrightarrow \mc{G}_0
  \mbox{ and } i_1:\mc{G}_0 \cap \mc{G}_1 \hookrightarrow \mc{G}_1\] the natural inclusions.
 Let
$\rho_1: \mc{G}_0 \cap \mc{G}_1 \longrightarrow M_{\widetilde{X}_0}(2,\widetilde{\mc{L}}_0)$ and 
 $\rho_2: \mc{G}_1 \longrightarrow M_{\widetilde{X}_0}(2,\widetilde{\mc{L}}_0)$
 be the natural bundle morphisms. Note that the restriction morphism $r_j: H^{2j}(\p3) \to H^{2j}(\mb{P}^1 \times \mb{P}^1)$ 
 is an isomorphism (Lefschetz hyperplane 
 section theorem), for any $j \not= \{1,3\}$.
 \end{note}

  By the Deligne-Blanchard theorem \cite{delibla} (the Leray spectral sequence
degenerates at $E_2$ for smooth families), we have 
$H^i(\mc{G}_0 \cap \mc{G}_1) \cong  \oplus_{_j} H^{i-j}(R^j \rho_{1,*}\mb{Q})$
 and $H^i(\mc{G}_1) \cong  \oplus_{_j} H^{i-j}(R^j \rho_{2,*}\mb{Q})$.
Since $M_{\widetilde{X}_0}(2,\widetilde{\mc{L}}_0)$ is smooth and simply connected 
(as it is rationally connected \cite[Proposition $2.3.7$ and Remark $2.3.8$]{ind} which implies 
simply connected \cite[Corollary $4.29$]{deba}), the local systems
$R^j \rho_{1,*} \mb{Q}$ and $R^j \rho_{2,*} \mb{Q}$ are trivial.
Therefore, for any $y \in M_{\widetilde{X}_0}(2,\widetilde{\mc{L}}_0)$, the natural morphisms
{\small \[H^i(\mc{G}_0 \cap \mc{G}_1) \twoheadrightarrow H^0(R^i \rho_{1,*} \mb{Q}) \to H^i((\mc{G}_0 \cap \mc{G}_1)_y)
 \mbox{ and }  H^i(\mc{G}_1) \twoheadrightarrow H^0(R^i \rho_{2,*} \mb{Q}) \to H^i(\mc{G}_{1,y})
\]}
are surjective. 
By the Leray-Hirsch theorem (see \cite[Theorem $7.33$]{v4}), we then have:
{\small \begin{align}
 H^i(\mc{G}_0 \cap \mc{G}_1) & \cong \bigoplus_{j \ge 0} (H^j((\mc{G}_0 \cap \mc{G}_1)_y) \otimes H^{i-j}(M_{\widetilde{X}_0}(2,\widetilde{\mc{L}}_0)))\nonumber \\ \label{ntor04}
& \cong H^i(M_{\widetilde{X}_0}(2,\widetilde{\mc{L}}_0)) \oplus H^{i-2}(M_{\widetilde{X}_0}(2,\widetilde{\mc{L}}_0)) \otimes (\mb{Q}\xi_1 \oplus \mb{Q}\xi_2) \oplus H^{i-4}(M_{\widetilde{X}_0}(2,\widetilde{\mc{L}}_0)) \xi_1\xi_2, \\ 
H^i(\mc{G}_1) & \cong \bigoplus_{j \ge 0} (H^j(\mc{G}_{1,y}) \otimes H^{i-j}(M_{\widetilde{X}_0}(2,\widetilde{\mc{L}}_0)))
 \cong \bigoplus\limits_{j = 0}^{3} H^{i-2j}(M_{\widetilde{X}_0}(2,\widetilde{\mc{L}}_0)) \otimes (\xi'')^j \label{ntor05} 
\end{align}}
  Using this one can check that:
 
 \begin{prop}\label{ner15}
  The following holds true:
  \[\ker((i_{0,*},i_{1,*}):H^{i-2}(\mc{G}_0 \cap \mc{G}_1) \to H^i(\mc{G}_0) \oplus H^i(\mc{G}_1)) \cong H^{i-4}(M_{\widetilde{X}_0}(2,\widetilde{\mc{L}}_0))(\xi_1 \oplus -\xi_2).\]
   \end{prop}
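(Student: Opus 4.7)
Using the Leray--Hirsch decomposition \eqref{ntor04}, write
\[\gamma = \alpha_0 + \alpha_1 \xi_1 + \alpha_2 \xi_2 + \alpha_{12}\, \xi_1\xi_2\]
with $\alpha_0 \in H^{i-2}(M),\ \alpha_1, \alpha_2 \in H^{i-4}(M),\ \alpha_{12} \in H^{i-6}(M)$, where $M := M_{\widetilde{X}_0}(2,\widetilde{\mc{L}}_0)$. Re-express $\gamma$ in the basis $\{1,\, \xi_1+\xi_2,\, \xi_1-\xi_2,\, \xi_1\xi_2\}$, separating an ``invariant'' part from the ``anti-invariant'' summand $\tfrac{\alpha_1-\alpha_2}{2}(\xi_1-\xi_2)$. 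The plan is to show that both $i_{1,*}$ and $i_{2,*}$ are injective on the invariant part (with independent components in the respective Leray--Hirsch decompositions) and annihilate the anti-invariant part, yielding $\ker((i_{1,*}, i_{2,*})) = H^{i-4}(M)(\xi_1 - \xi_2)$.

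The inclusion $i_2 : \mc{G}_0 \cap \mc{G}_1 \hookrightarrow \mc{G}_1$ is fiberwise over $M$ the Segre embedding of a $\mb{P}^1 \times \mb{P}^1$ as a degree-$2$ quadric in $\mb{P}^3$, so $i_2^*\xi'' = \xi_1 + \xi_2$ and $[\mc{G}_0 \cap \mc{G}_1]_{\mc{G}_1} = 2\xi''$ (for appropriate Leray--Hirsch lifts). Since $1, \xi_1+\xi_2 = i_2^*\xi''$, and $\xi_1\xi_2$ all lie in $\Ima(i_2^*)$, the projection formula gives
\[i_{2,*}\bigl(\alpha_0 + \tfrac{\alpha_1+\alpha_2}{2}(\xi_1+\xi_2) + \alpha_{12}\xi_1\xi_2\bigr) = 2\alpha_0\,\xi'' + (\alpha_1+\alpha_2)(\xi'')^2 + \alpha_{12}(\xi'')^3,\]
with independent components under \eqref{ntor05}. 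For the anti-invariant part, $i_{2,*}(\xi_1-\xi_2) = 0$: the natural $\mb{Z}/2$-symmetry swapping the two rulings of the quadric bundle $\mc{G}_0 \cap \mc{G}_1$ extends to an involution of $\mc{G}_1$ fixing $\xi''$ and acting trivially on pullbacks from $M$, hence trivially on $H^*(\mc{G}_1, \mb{Q})$; naturality of Gysin then forces $i_{2,*}(\xi_1 - \xi_2) = -i_{2,*}(\xi_1 - \xi_2)$, i.e.\ zero. Combining, $\ker(i_{2,*}) = H^{i-4}(M)(\xi_1 - \xi_2)$.

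For $i_{1,*}$, the partial isomorphism $P_0 \setminus Z \cong \mc{G}_0 \setminus Z'$ (with $\mc{G}_0 \cap \mc{G}_1$ disjoint from both $Z$ and $Z'$) reduces $i_{1,*}$ to the inclusion of $\mc{G}_0 \cap \mc{G}_1$ as the boundary hyperplane section of the $\ov{\mr{SL}}_2$-bundle $P_0 \to M$. Since $\ov{\mr{SL}}_2$ is a smooth quadric threefold in $\mb{P}^4$ with $H^*(\ov{\mr{SL}}_2, \mb{Q}) \cong \mb{Q}[H]/(H^4)$ and $H|_{\text{boundary}} = \xi_1 + \xi_2$, the identical symmetry argument gives $i_{1,*}(\xi_1 - \xi_2) = 0$ in $H^*(P_0)$; excision transfers the vanishing back to $\mc{G}_0$, so $i_{1,*}$ also annihilates $H^{i-4}(M)(\xi_1 - \xi_2)$.

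The main obstacle will be justifying globally the swap involution on $\mc{G}_1$ and $P_0$ and verifying that it acts trivially on cohomology; this should rely on the inherent $\mb{Z}/2$-symmetry in the Thaddeus / Sun construction, which stems from the interchange of the two branches of $X_0$ at the node. A secondary bookkeeping issue is controlling the $H^*(M)$-corrections in the Leray--Hirsch identifications of $i_2^*\xi''$ and the class of the divisor, which must be absorbed by appropriate choices of lifts so as not to disturb the kernel computation.
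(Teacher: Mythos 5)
Your overall strategy is sound, and in fact the paper gives no argument at all for this proposition (it is outsourced to [mumf, Theorem 4.2]), so your attempt has to stand on its own. The fiberwise computations are correct: on a fiber, both rulings of the quadric $\mb{P}^1\times\mb{P}^1\subset\mb{P}^3$ (resp.\ of the boundary of $\ov{\mr{SL}}_2$) are lines with the same class, so $\xi_1-\xi_2$ dies under the fiberwise Gysin map, and your upper-triangular argument for injectivity on the span of $1,\ \xi_1+\xi_2,\ \xi_1\xi_2$ is fine. But the step you yourself flag as the main obstacle is a genuine gap, not bookkeeping: the swap involution does not exist. The two rulings of $\mc{G}_0\cap\mc{G}_1\to M_{\widetilde{X}_0}(2,\widetilde{\mc{L}}_0)$ are the projectivizations of the two rank-$2$ bundles $\mc{E}_{x_1}$ and $\mc{E}_{x_2}$ obtained by restricting the universal bundle to the two preimages of the node, and $\mc{G}_1$ is fiberwise $\mb{P}(\Hc(\mc{E}_{x_1},\mc{E}_{x_2}))$; transposition lands in $\Hc(\mc{E}_{x_2},\mc{E}_{x_1})$, a different bundle, and a very general $X_0$ has no automorphism interchanging the two branches at the node. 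What actually saves the computation is not a symmetry of the total space but the equality of Chern classes $c_j(\mc{E}_{x_1})=c_j(\mc{E}_{x_2})$ (the bundles $\mc{E}_x$ vary continuously over the connected curve $\widetilde{X}_0$). With that input, the vanishing $i_{2,*}(\xi_1-\xi_2)=0$ follows from $i_2^{*}i_{2,*}(\beta)=\beta\cdot c_1(\mc{N}_{\mc{G}_0\cap\mc{G}_1|\mc{G}_1})$, the injectivity of $i_2^*$ in degree $4$, and the Grothendieck relations --- but only after a specific normalization of the Leray--Hirsch lifts $\xi_1,\xi_2$ (e.g.\ the relative hyperplane classes of $\mb{P}(\mc{E}_{x_1})$ and $\mb{P}(\mc{E}_{x_2})$); for other lifts the kernel is an affinely shifted copy of $H^{i-4}(M_{\widetilde{X}_0}(2,\widetilde{\mc{L}}_0))$. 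So this part of your argument reaches a true conclusion by an invalid route, and the "absorbed by appropriate choices of lifts" remark is doing real, unperformed work.

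The second gap is the excision step for $i_{1,*}$, and it does not close as written. Compatibility of Gysin maps with open restriction only gives that the image of $i_{1,*}\bigl((\xi_1-\xi_2)\rho_1^*a\bigr)$ vanishes in $H^i(\mc{G}_0\setminus Z')$, i.e.\ that this class lies in the image of $H^i_{Z'}(\mc{G}_0,\mb{Q})\to H^i(\mc{G}_0,\mb{Q})$. That kernel is nonzero as soon as $i\ge 2\,\codim Z'$ (e.g.\ it contains the cycle classes of components of $Z'$ and of subvarieties of $Z'$), and the proposition is needed for all $i$. You therefore prove only that $i_{1,*}$ maps $H^{i-4}(M_{\widetilde{X}_0}(2,\widetilde{\mc{L}}_0))(\xi_1-\xi_2)$ into classes supported on $Z'$, not that it kills them; since $\ker((i_{1,*},i_{2,*}))=\ker(i_{1,*})\cap\ker(i_{2,*})$, this leaves the inclusion "$\supseteq$" of the proposition unproved. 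To repair it you need either a codimension bound on $Z'$ in the relevant degrees, or an argument that avoids $Z'$ altogether --- for instance dualizing: by Poincar\'e duality on the three smooth varieties, $\ker((i_{1,*},i_{2,*}))$ is dual to $\mr{coker}\bigl(i_1^*+i_2^*:H^{2N-i}(\mc{G}_0)\oplus H^{2N-i}(\mc{G}_1)\to H^{2N-i}(\mc{G}_0\cap\mc{G}_1)\bigr)$, which is manifestly a quotient of $\mr{coker}(i_2^*)\cong H^{2N-i-2}(M_{\widetilde{X}_0}(2,\widetilde{\mc{L}}_0))$ and reduces the problem to showing $\Ima(i_1^*)$ contributes nothing new --- but that still requires information about $H^*(\mc{G}_0)$ that your sketch does not supply.
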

 
 \begin{proof}
  See \cite[Theorem $4.2$]{mumf} for proof of the statement.
 \end{proof}

 Let $\mc{G}(2,\mc{L})_\infty$ denote the base change of the family $\mc{G}(2,\mc{L})$
 (defined in \S \ref{subsec1}) over $\Delta$ to the universal cover $\mf{h}$ by the morphism  $e:\mf{h} \to \Delta^* \xrightarrow{j} \Delta$. Using the definitions and results from  \S \ref{sec3}, we can 
 equip $H^{i}(\mc{G}(2,\mc{L})_\infty)$ with a limit mixed Hodge structure 
 and obtain a specialization morphism:
  \[\mr{sp}_i: H^i(\mc{G}_{X_0}(2,\mc{L}_0)) \to H^i(\mc{G}(2,\mc{L})_\infty)\]
which is a morphism of mixed Hodge structures for all $i$.
 Replacing $Y_1$ (resp. $Y_2$) in \S \ref{sec3} by $\mc{G}_0$ (resp. $\mc{G}_1$) from \S \ref{subsec1}, we 
 have by Proposition \ref{ner15}
  that the kernel of the morphism $f_i$ is isomorphic
  to $H^{i-4}(M_{\widetilde{X}_0}(2,\widetilde{\mc{L}}_0))(-2)$ i.e., 
  we have an exact sequence of mixed Hodge structures (see Corollary \ref{ntor02}):
    \begin{eqnarray}\label{ntor09}
 &0 \to H^{i-4}(M_{\widetilde{X}_0}(2,\widetilde{\mc{L}}_0))(-2) \xrightarrow{h_i} H^{i-2}(\mc{G}_0 \cap \mc{G}_1)(-1) \xrightarrow{f_i} H^i(\mc{G}_{X_0}(2,\mc{L}_0)) \xrightarrow{\mr{sp}_i} \nonumber\\
 &\xrightarrow{\mr{sp}_i} H^i(\mc{G}(2,\mc{L})_\infty) \xrightarrow{g_i} \mr{Gr}^W_{i+1} H^i(\mc{G}(2,\mc{L})_\infty) \to 0.
  \end{eqnarray}

 \section{Limit Hodge conjecture for relative Gieseker moduli space}
Recall, that for a very general smooth, projective curve $Y$, $H^{2p}_{\mr{Hdg}}(\mr{Jac}(Y)) \cong \mb{Q}$
for every $p$ (see \cite[Theorem $17.5.1$]{birk}. It is natural to ask, given a degenerating family of 
smooth, projective curves, is the sub-space of $p$-th Hodge classes in the limit mixed Hodge 
structure associated to the corresponding family of Jacobians, one dimensional for each $p$?
Apriori, this is false as can be seen from the theory of Noether-Lefschetz loci i.e., very generally, the 
rank of the Hodge lattice jumps. However, in this section we prove that under the genericity assumption of the 
nodal curve $X_0$, we have a positive answer (see Theorem \ref{thm:lim}). This result will later 
play an important role in the proof of Theorem \ref{hod02}.

 \subsection{Hodge conjecture in the smooth case}\label{recsmcase}
  We now recall from \cite{bala2}
  the proof of the Hodge conjecture for the moduli space of rank $2$ vector bundles with fixed determinant on a very general, 
  smooth, projective curve. We give a brief sketch for the sake of completion.  

  Let $C$ be a smooth, projective curve and $\mc{L}$ an invertible sheaf on $C$ of odd degree.
  Denote by $M_C(2,\mc{L})$ the moduli of semi-stable, rank $2$ sheaves on $C$ with determinant $\mc{L}$.
 By \cite[Proposition $1$]{mumn}, there exists an isomorphism of pure Hodge structures:
   \[\Phi:H^1(C, \mb{Z})(-1) \to H^3(M_{C}(2,\mc{L}), \mb{Z}).\]
   By \cite[Theorem $1$]{new1}, there exists 
 $\alpha \in H^2(M_{C}(2,\mc{L}), \mb{Z})$
 and $\beta \in H^4(M_{C}(2,\mc{L}), \mb{Z})$ such that the cohomology ring $H^*(M_{C}(2,\mc{L}))$ is generated by $\alpha, \beta$
 and $H^3(M_C(2,\mc{L}))$. 
 Now, there is a natural isomorphism of pure Hodge structures 
 \begin{equation}\label{eq:jac}
  H^1(\mr{Jac}(C),\mb{Z}) \xrightarrow{\sim} H^1(C,\mb{Z})^\vee \xrightarrow{\sim} H^1(C,\mb{Z})
 \end{equation}
 where the first isomorphism comes from the cohomology of Jacobians (see \cite[p. $169$]{v4}) and 
 the second isomorphism is the Poincar\'{e} duality. Combining with the isomorphism $\Phi$ above, we get
 an isomorphism of pure Hodge structures:
 \[H^1(\mr{Jac}(C), \mb{Z})(-1) \to H^1(C, \mb{Z})(-1) \xrightarrow{\Phi} H^3(M_C(2,\mc{L}), \mb{Z}).\]
 Since cup-product is a morphism of pure Hodge structures,
 this isomorphism induces a surjective (graded) ring homomorphism which is a morphism of pure Hodge 
 structures on each graded piece ($\alpha$ and $\beta$ are given weights $2$ and $4$, respectively):
 \begin{equation}\label{eq:gies}
   \delta_C:\, \mb{Q}[\alpha, \beta] \otimes (\oplus_i\, \, H^i(\mr{Jac}(C))(-i)) \to H^*(M_{C}(2,\mc{L}))
 \end{equation}
(see \cite[$(1.2)$]{bala2}). As $\alpha$ and $\beta$ are linear combination of Chern classes of 
the universal bundle associated to $M_C(2,\mc{L})$, they are algebraic cohomology classes 
(see \cite[p. $338$]{new1}). 
 Then, by \cite[Theorem $2$]{bala2} the isomorphism $\delta_C$ induces a surjective ring homomorphism:
 \[\delta_C:\,   \mb{Q}[\alpha, \beta] \otimes H^*_A(\mr{Jac}(C)) \to H^*_A(M_{C}(2,\mc{L}))\]
 where $H^{2i}_A(Y)$ denotes the vector sub-space of \emph{algebraic} cohomology classes
 in $H^{2i}(Y)$ and 
 $H^*_A(Y)= \oplus_i H^{2i}_A(Y)$.
  Recall by \cite[Theorem $17.5.1$]{birk}, for a very general curve $C$,  
 $H^*_{\mr{Hdg}}(\mr{Jac}(C))$ is generated (as a $\mb{Q}$-vector space) by the theta divisor.
 Hence, using $\delta_C$ we have 
 \begin{equation}\label{eq:alg4}
  H^*_{\mr{Hdg}}(\mr{Jac}(C))=H^*_A(\mr{Jac}(C))
 \mbox{ and } H^*_{\mr{Hdg}}(M_{C}(2,\mc{L}))=H^*_A(M_{C}(2,\mc{L})).
 \end{equation}

 \subsection{Limit mixed Hodge structures for families of Jacobians}\label{subsecal1}
 Keep notations \S \ref{tor33}. Recall, the degenerating family 
 \[\pi_1: \mc{X} \to \Delta\]
 of curves which is smooth over the punctured disc $\Delta^*$ and the central 
 fiber is the irreducible nodal curve $X_0$. There exists a family of 
 Jacobians of curves associated to the family $\pi_1$, i.e. 
 \[\pi_4: \mbf{J}_{\mc{X}_{\Delta^*}} \to \Delta^*\]
 is a smooth, projective morphism such that for every $s \in \Delta^*$, the fiber 
 $\pi_4^{-1}(s)$ is the Jacobian $\mr{Jac}(\mc{X}_s)$ of the curve $\mc{X}_s:=\pi_1^{-1}(s)$.
By \eqref{eq:jac}, there is a natural isomorphism of pure Hodge structures
 \[\delta_s: H^1(\mr{Jac}(\mc{X}_s), \mb{Z}) \to 
 H^1(\mc{X}_s,\mb{Z}). \]
 This induces an isomorphism of local systems 
 \begin{equation}\label{eq:limjac}
  \mb{H}^1_{\mbf{J}_{\Delta^*}} \xrightarrow{\sim} \mb{H}^1_{\mc{X}_{\Delta^*}}
 \end{equation}
which induces an isomorphism of Hodge bundles from $\mc{H}^1_{\mbf{J}_{\Delta^*}}$ to 
$\mc{H}^1_{\mc{X}_{\Delta^*}}$ i.e., preserves the Hodge filtration $F^p$ (see \S \ref{sec3} for notations).
Composing the above isomorphism with $\Phi_{\Delta^*}$ from \S \ref{sec:mon}, we get an isomorphism of local 
systems 
\[\delta_{\Delta^*}: \mb{H}^1_{\mbf{J}_{\Delta^*}} \xrightarrow[\sim]{\eqref{eq:limjac}} \mb{H}^1_{\mc{X}_{\Delta^*}} \xrightarrow[\sim]{\Phi_{\Delta^*}} \mb{H}^3_{\mc{G}(2,\mc{L})_{\Delta^*}}.\]
which induces an isomorphism of Hodge bundles from $\mc{H}^1_{\mbf{J}_{\Delta^*}}$ to 
$\mc{H}^3_{\mc{G}(2,\mc{L})_{\Delta^*}}$.
Since the monodromy action on isomorphic local systems is the same, the weight filtrations 
associated to the two limit mixed Hodge structures are the same. In particular, we have 

\begin{thm}
 The morphism $\delta_{\Delta^*}$
induces an isomorphism of mixed Hodge structures:
\[\delta_\infty: H^1(\mbf{J}_\infty)(-1) \xrightarrow{\sim} H^3(\mc{G}(2,\mc{L})_\infty)\]
where $\mbf{J}_\infty$ (resp. $\mc{G}(2,\mc{L})_\infty$) is the base change of $\mbf{J}_{\Delta^*}$ (resp. 
$\mc{G}(2,\mc{L})_{\Delta^*}$) to the upper half plane $\mf{h}$ (see Definition \ref{defiunc} for notation) and 
the mixed Hodge structures on $H^1(\mbf{J}_\infty)$ and $H^3(\mc{G}(2,\mc{L})_\infty)$ are as 
defined in \S \ref{sec3}.
\end{thm}

\begin{proof}
 The proof is identical to the proof of Theorem \ref{ner01} (or \cite[Proposition $4.1$]{indpre}) 
 after replacing 
$\mc{X}$ in the statement with $\mbf{J}$.
\end{proof}

 \begin{rem}\label{rem:surj}
   Since cup-product is a morphism of mixed Hodge structures (see 
 \cite[Corollary $5.45$]{pet}), we have the following morphism of mixed Hodge structures for each $i \ge 0$:
 \[\Phi_\infty^{(i)}: H^i(\mbf{J}_\infty)(-i) = \bigwedge^i H^1(\mbf{J}_\infty)(-1) 
  \xrightarrow{\delta_\infty} \bigwedge^i H^3(\mc{G}(2,\mc{L})_\infty) \xrightarrow{\cup} H^{3i}(\mc{G}(2,\mc{L})_\infty),\]
  where $\cup$ denotes the cup-product morphism.
Choose a point $s' \in \mf{h}$ such that $e(s')=s$. Since $\mf{h}$ is contractible, 
 by the Ehresmann's lemma, there is a natural isomorphism (induced by the inclusion 
 of the fiber over $s'$ into $\mc{G}(2,\mc{L})_\infty$):
 \begin{equation}\label{eq:alg3}
  \phi_s^i: H^i(\mc{G}(2,\mc{L})_\infty,\mb{Z}) \to H^i(M_{\mc{X}_s}(2,\mc{L}_s),\mb{Z})
 \end{equation}
 commuting with the cup-product (cup-product commutes with pull-back by continuous maps)
\[ \mbox{i.e., }\, \, \phi_s^i(\xi_1) \cup \phi_s^j(\xi_2)=\phi_s^{i+j}(\xi_1 \cup \xi_2)
 \mbox{ for } \xi_1 \in H^i(\mc{G}(2,\mc{L})_\infty,\mb{Z}) \mbox{ and }
 \xi_2 \in H^j(\mc{G}(2,\mc{L})_\infty,\mb{Z}).\]
   Denote by $\alpha_\infty:=(\phi^2_s)^{-1}(\alpha_s)$ and $\beta_\infty:=(\phi^4_s)^{-1}(\beta_s)$,
   where $\alpha_s \in H^2(M_{\mc{X}_s}(2,\mc{L}_s),\mb{Z})$ and $\beta_s \in  H^4(M_{\mc{X}_s}(2,\mc{L}_s),\mb{Z})$
   arise as linear combination of the Chern classes of a universal bundle $\mc{U}_s$ for $M_{\mc{X}_s}(2,\mc{L}_s)$ 
   (see \cite[p. $337$]{new1} for an explicit description). As $\mc{U}_s$ is the restriction to the fiber over $s$
   of the universal bundle $\mc{U}$ as in \S \ref{sec:mon} for $\mc{G}(2,\mc{L})_{\Delta^*}$, 
   the classes $\alpha_s$ and $\beta_s$ are monodromy invariant. Hence, $\alpha_\infty$ and $\beta_\infty$ does 
   not depend on the choice of $s$.
      Then the morphism $\Phi_{\infty}^{(i)}$ induces a (graded) ring homomorphism  
   \[\Phi_\infty:\, \mb{Q}[\alpha_\infty, \beta_\infty] \otimes \left(\bigoplus_i H^i(\mbf{J}_\infty)(-i)\right) \xrightarrow{\mr{id} \times \oplus \Phi_\infty^{(i)}} H^*(\mc{G}(2,\mc{L})_{\infty})\]
   which induces a morphism of mixed Hodge structures on each graded piece ($\alpha_\infty$ and $\beta_\infty$ are 
   considered as Hodge classes of weight $2$ and $4$, respectively).
   Since $H^*(M_{\mc{X}_s}(2,\mc{L}_s))$ is generated by $\alpha_s, \beta_s$ and $H^3(M_{\mc{X}_s}(2,\mc{L}_s))$ (\cite[Theorem 1]{new1}) and 
   $\phi_s^i$ commutes with cup-product, the cohomology ring $H^*(\mc{G}(2,\mc{L})_\infty)$ is generated 
   by $\alpha_\infty, \beta_\infty$ and $H^3(\mc{G}(2,\mc{L})_\infty)$.
 Since the morphism $\delta_\infty$ is an isomorphism, this implies that the morphism $\Phi_\infty$ above is  surjective.
 \end{rem}

\subsection{Compactified Jacobians of curves}\label{sec:comp}
Let $X_0$ be an irreducible nodal curve. The \emph{compactified Jacobian} $\ov{J}_{X_0}$ parametrizes rank $1$, degree $0$ torsion-free sheaves on $X_0$
(see \cite[Theorem $3.3$]{lant1}). The compactified Jacobian is a semi-normal, projective variety. In particular, the normalization map 
\[h: \widetilde{J}_{X_0} \to \ov{J}_{X_0}\]
is the desingularization of $\ov{J}_{X_0}$ i.e., $\widetilde{J}_{X_0}$ is non-singular and the pre-image under $h$ of the singular locus of $\ov{J}_{X_0}$ 
consists of two disjoint non-singular divisors, say $D_1$ and $D_2$ in $\widetilde{J}_{X_0}$
which are both isomorphic to the Jacobian $\mr{Jac}(\widetilde{X}_0)$, of the normalization $\widetilde{X}_0$ of the curve $X_0$.
Moreover, $\widetilde{J}_{X_0}$ is a $\mb{P}^1$-bundle over $\mr{Jac}(\widetilde{X}_0)$. See \cite[p. $567$]{narafac} for a nice description.

One can also consider the compactified Jacobian $\ov{J}_{X_0}$ as a degeneration of Jacobians of smooth curves. 
In particular, given a family $\pi_1$ of smooth curves degenerating to $X_0$ as above, there exists 
a flat, projective family (see \cite[Theorem $3.3$]{lant1}) \[\pi_5: \ov{\mbf{J}} \to \Delta\]
such that for any $s \in \Delta^*$, $\pi_5^{-1}(s) \cong \mr{Jac}(\mc{X}_s)$ and the central fiber $\pi_5^{-1}(0)=\ov{J}_{X_0}$.
Blowing up the family $\ov{\mbf{J}}$ along the singular locus of the central fiber $\ov{J}_{X_0}$ followed by an etale base change, we get 
a flat projective family (this is the \emph{semi-stable reduction} of $\pi_5$, see \cite[Example $(2.15)$]{ste2}) \[\widetilde{\pi}_5: \ov{\mbf{J}}' \to \Delta\]
such that the fibers over $\Delta^*$ coincides with that of $\pi_5$ above and the central fiber $\widetilde{\pi}_5^{-1}(0)$ is a reduced 
simple normal crossings divisor consisting of two irreducible components $\widetilde{J}_{X_0}$ and $E$,
where $E$ is a $\mb{P}^1$-bundle over $\mr{Jac}(\widetilde{X}_0)$ and $E \cap \widetilde{J}_{X_0}= D_1 \cup D_2$.

\subsection{Limit Hodge conjecture}
 Denote by $H^{2p}_{\mr{Hdg}}(\mbf{J}_\infty):= H^{p,p} \mr{Gr}^W_{2p} H^{2p}(\mbf{J}_{\infty})$. 
 Using the description of compactified Jacobians in families given above we prove:
 
 \begin{thm}\label{thm:lim}
  For a very general nodal curve $X_0$ as defined in the Introduction, 
  we have $H^{2p}_{\mr{Hdg}}(\mbf{J}_\infty) \cong \mb{Q}$ for each $p$.
 \end{thm}

 \begin{proof}
  For the proof of this statement we use the exact sequence \eqref{ntor02}. Notations as in \S \ref{sec:comp}.
  Applying Corollary \ref{ntor03} to $\widetilde{\pi}_5$ above, we get the exact sequence of pure Hodge structures:
  \begin{equation}\label{eq:spe}
   H^{2p-2}(D_1 \cup D_2)(-1) \xrightarrow{f_{2p}} \mr{Gr}^W_{2p} H^{2p}(\widetilde{J}_{X_0} \cup E) \xrightarrow{\mr{sp}_{2p}} \mr{Gr}^W_{2p} H^{2p}(\mbf{J}_\infty) \to 0.
  \end{equation}
  To compute the middle term in this exact sequence we use the Mayer-Vietoris sequence. In particular, we have an exact sequence of pure Hodge structures:
  \begin{equation}\label{eq:may}
   0 \to \mr{Gr}^W_{2p}  H^{2p}(\widetilde{J}_{X_0} \cup E) \to  H^{2p}(\widetilde{J}_{X_0}) \oplus H^{2p}(E) \to H^{2p}(D_1 \cup D_2).
  \end{equation}
  Recall, $E$ and $\widetilde{J}_{X_0}$ are $\mb{P}^1$-bundles over $\mr{Jac}(\widetilde{X}_0)$.
  Denote by 
  \[q_{_E}: E \to \mr{Jac}(\widetilde{X}_0) \mbox{ and } q: \widetilde{J}_{X_0} \to \mr{Jac}(\widetilde{X}_0)\]
  the natural bundle morphisms.  Let $\xi_0 \in H^2(\mb{P}^1)$ be a generator.
  By the Leray-Hirsch theorem (see \cite[Theorem $7.33$]{v4}), we have for any closed point 
  $y \in \mr{Jac}(\widetilde{X}_0)$ and $p \ge 0$:
  \begin{align}
   & H^{2p}(E) \cong \bigoplus_{j \ge 0} (H^j(E_y) \otimes q_{_E}^*H^{2p-j}(\mr{Jac}(\widetilde{X}_0))) \cong 
   q_{_E}^* H^{2p}(\mr{Jac}(\widetilde{X}_0)) \oplus q_{_E}^* H^{2p-2}(\mr{Jac}(\widetilde{X}_0))\xi_0
  \label{eq:ler01}\\
  & H^{2p}(\widetilde{J}_{X_0}) \cong \bigoplus_{j \ge 0} (H^j(\widetilde{J}_{X_0, y}) \otimes q^*H^{2p-j}(\mr{Jac}(\widetilde{X}_0)))
  \cong q^* H^{2p}(\mr{Jac}(\widetilde{X}_0)) \oplus q^* H^{2p-2}(\mr{Jac}(\widetilde{X}_0))\xi_0 \label{eq:ler02}
  \end{align}
   Denote by $q_i: D_i \hookrightarrow \widetilde{J}_{X_0}$ and $q_{_{E,i}}: D_i \hookrightarrow E$ the closed immersions.  
  As $q \circ q_i$ and $q_{_E} \circ q_{_{E,i}}$ are isomorphisms, $q_i^*$ (resp. $q_{_{E,i}}^*$) maps 
  $q^* H^{2p}(\mr{Jac}(\widetilde{X}_0))$ (resp. $q_{_E}^* H^{2p}(\mr{Jac}(\widetilde{X}_0))$) isomorphically to 
   $H^{2p}(D_i)$. 
Using the decompositions \eqref{eq:ler01} and \eqref{eq:ler02}, we then conclude that the last (restriction)
  morphism of \eqref{eq:may} is surjective.
   Since (by assumption) $X_0$ is very general, we have $H^{2p}_{\mr{Hdg}}(\mr{Jac}(\widetilde{X}_0)) \cong \mb{Q}$
  for every $p \ge 0$. Hence, using the decompositions \eqref{eq:ler01} and \eqref{eq:ler02} we have 
  \[\dim H^{2p-2}_{\mr{Hdg}}(D_1 \cup D_2)=\dim H^{2p}_{\mr{Hdg}}(D_1 \cup D_2) = 2 = \dim H^{2p}_{\mr{Hdg}}(\widetilde{J}_{X_0})=\dim H^{2p}_{\mr{Hdg}}(E).\]
  Then the surjectivity on the right of \eqref{eq:may} implies that 
  $\dim H^{p,p}\mr{Gr}^W_{2p} H^{2p}(\widetilde{J}_{X_0} \cup E)=2$. 
 Using the exact sequence \eqref{eq:spe} it therefore suffices to check that the kernel of $f_{2p}$ when restricted to 
 $H^{2p-2}_{\mr{Hdg}}(D_1 \cup D_2)(-1)$ is one dimensional.
  Using the projection formula (see \cite[Lemma B.$26$]{pet}) and the identification described in the 
  Leray-Hirsch theorem (identifying certain cohomology classes with their restriction to $y$) 
  we have for any $\gamma \in H^{2p-2}(\mr{Jac}(\widetilde{X}_0))$, that 
  \[q_{i,_*} \circ q_i^* \circ q^*(\gamma)=q^*(\gamma)\xi_0 \mbox{ and } 
  q_{_{E,i},_*} \circ q_{_{E,i}}^* \circ q_{_E}^*(\gamma)=q_{_E}^*(\gamma)\xi_0.\] 
  Taking $\gamma= \theta^{p-1}$, where $\theta \in H^2(\mr{Jac}(\widetilde{X}_0))$ is the 
  class of the theta divisor, we then observe that the kernel of 
  \[f_{2p}: H^{2p-2}_{\mr{Hdg}} (D_1 \cup D_2)(-1) \to H^{p,p} \mr{Gr}^W_{2p}H^{2p}(\widetilde{J}_{X_0} \cup E)\]
  is one dimensional, generated by (a scalar multiple of) the pull-back of $\theta^{p-1}$
  (recall from \S \ref{sec3}
 that $f_{2p}$ is  induced by the Gysin morphism).
 This proves the theorem.
  \end{proof}

 \begin{note}\label{note:hdg}
  Let $\phi_s^i$ as in \eqref{eq:alg3}.
  Denote by $H^{2p}_{\mr{Hdg}}(\mc{G}(2,\mc{L})_\infty):= H^{p,p} \mr{Gr}^W_{2p} H^{2p}(\mc{G}(2,\mc{L})_{\infty})$
  and  \begin{align*}
H^{2p}_A(\mc{G}(2,\mc{L})_\infty) &= \left\{\begin{array}{l}
       \sigma \in H^{2p}(\mc{G}(2,\mc{L})_\infty) \mbox{ such that for a very general } s \in \Delta^* \mbox{ and the }\\
 \mbox{natural isomorphism } \phi_s^{2p}: H^{2p}(\mc{G}(2,\mc{L})_\infty) \to H^{2p}(\mc{G}(2,\mc{L})_s),\\
 \phi_s^{2p}(\sigma) \in H^{2p}_A(\mc{G}(2,\mc{L})_s).    \end{array}\right\}
\end{align*}
 \end{note}

 \begin{cor}\label{cor:lim}
 Let $X_0$ be as in Theorem \ref{thm:lim}.
   Let $\theta_\infty$ be a generator of $H^{2}_{\mr{Hdg}}(\mbf{J}_\infty)$ (see Theorem \ref{thm:lim}). 
      Then, the graded ring $H^*_{\mr{Hdg}}(\mc{G}(2,\mc{L})_\infty):= \oplus_p H^{2p}_{\mr{Hdg}}(\mc{G}(2,\mc{L})_\infty)$ is generated (as a ring over $\mb{Q}$) by $\alpha_\infty, \beta_\infty$ and (the image of) $\theta_\infty$.
      In particular, $H^{2p}_A(\mc{G}(2,\mc{L})_\infty)=H^{2p}_{\mr{Hdg}}(\mc{G}(2,\mc{L})_\infty)$.
 \end{cor}

 The above corollary is what we call the ``limit'' Hodge conjecture in the family of Gieseker moduli spaces. 
 Also note that $H^*_{\mr{Hdg}}(\mc{G}(2,\mc{L})_\infty)$ is not a polynomial ring in $3$ variables i.e.,
 there are relations between $\alpha_\infty, \beta_\infty$ and $\theta_\infty$ (see \cite[p. $408$]{kingn}).

 \begin{proof}[Proof of Corollary \ref{cor:lim}]
 Recall from Remark \ref{rem:surj} that $\Phi_\infty^{(i)}$ is a morphism of mixed Hodge structures and $\Phi_\infty$,
 which is the direct sum over all the graded pieces, is surjective. This implies that we have a surjective morphism 
 induced by $\Phi_\infty$:
 \[\Phi_\infty: \mb{Q}[\alpha_\infty, \beta_\infty] \otimes_{\mb{Q}} H^*_{\mr{Hdg}}(\mbf{J}_\infty) \twoheadrightarrow 
  H^*_{\mr{Hdg}}(\mc{G}(2,\mc{L})_\infty),\, \mbox{ where } H^*_{\mr{Hdg}}(\mbf{J}_\infty):=
  \bigoplus_{p \ge 0} H^{2p}_{\mr{Hdg}}(\mbf{J}_\infty). \]
By Theorem \ref{thm:lim}, $H^*_{\mr{Hdg}}(\mbf{J}_\infty)$ is generated by $\theta_\infty$. 
Hence, $H^*_{\mr{Hdg}}(\mc{G}(2,\mc{L})_\infty)$ is generated  by $\alpha_\infty, \beta_\infty$ and (the image of) $\theta_\infty$. 
This proves the first part of the corollary.

Observe that for a very general $s \in \Delta^*$, 
$H^*_A(\mc{G}(2,\mc{L})_s)=H^*_A(M_{\mc{X}_s}(2,\mc{L}_s))$ is generated as a ring over $\mb{Q}$, 
by $\alpha_s, \beta_s$ and (the image of) a generator $\theta_s$ of $H^2_A(\mr{Jac}(\mc{X}_s))$
(see \S \ref{recsmcase}). Since $\alpha_\infty, \beta_\infty$ and (the image of) $\theta_\infty$
maps to $\alpha_s, \beta_s$ and (the image of) $\theta_s$, respectively (upto multiplication by a scalar) by the morphism 
$\phi_s^i$ as in \eqref{eq:alg3} for $i=2, 4, 6$, we conclude that $H^*_A(\mc{G}(2,\mc{L})_\infty)$
is generated by $\alpha_\infty, \beta_\infty$ and (the image of) $\theta_\infty$. 
Using the first part, we conclude 
$H^{*}_A(\mc{G}(2,\mc{L})_\infty)=H^{*}_{\mr{Hdg}}(\mc{G}(2,\mc{L})_\infty)$ as graded rings over $\mb{Q}$.
This proves the corollary.
 \end{proof}

\section{Main Results}\label{sec5}
 
  Assume that $X_0$ is a very 
  general, irreducible nodal curve  as defined in the Introduction. Note that, by the genericity of $X_0$,
  we can choose the family of curves $\pi_1$ as in 
  \S \ref{tor33} such that for a very general $s \in \Delta^*$, 
  we have $H^*_{\mr{Hdg}}(\mr{Jac}(\mc{X}_s))$ is generated by the cohomology class of the theta 
  divisor $\theta_s \in H^2_{\mr{Hdg}}(M_{\mc{X}_s}(2,\mc{L}_s))$.
  We fix such a family $\pi_1$, for this section.
    Let $j_0:\mc{G}_0 \hookrightarrow \mc{G}_{X_0}(2,\mc{L}_0)$ and $j_1:\mc{G}_1 \hookrightarrow \mc{G}_{X_0}(2,\mc{L}_0)$ be
  the natural inclusions. Define the \emph{algebraic cohomology groups} on $\mc{G}_{X_0}(2,\mc{L}_0)$ as:
  \[
  H^{2i}_A(\mc{G}_{X_0}(2,\mc{L}_0)) = \left\{\begin{array}{l}
    \sigma \in \mr{Gr}^W_{2i} H^{2i}(\mc{G}_{X_0}(2,\mc{L}_0)) \mbox{ such that } j_t^*(\sigma) \in 
    H^{2i}_A(\mc{G}_t),\, \, t=0, 1          \end{array}\right\}
    \]

 We first obtain an  exact sequence analogous to \eqref{ntor09} for algebraic classes.

 \begin{lem}\label{lem:alg}
  For any $i \ge 0$, the algebraic cohomology group $H^{2i}_A(\mc{G}_{X_0}(2,\mc{L}_0))$ sits in the following exact sequence:
   \begin{equation}\label{eq:alg1}
   0 \to H^{2i-4}_A(M_{\widetilde{X}_0}(2,\widetilde{\mc{L}}_0)) \xrightarrow{h_{2i}}
   H^{2i-2}_A(\mc{G}_0 \cap \mc{G}_1) \xrightarrow{f_{2i}} H^{2i}_A(\mc{G}_{X_0}(2,\mc{L}_0)) \xrightarrow{\mr{sp}_{2i}}
   H^{2i}_A(\mc{G}(2,\mc{L})_\infty)  \to  0.
   \end{equation} 
 \end{lem}

 \begin{proof}
 The exact sequence \eqref{eq:alg1} will be induced by the exact sequence \eqref{ntor09}.
  We first claim that the restriction of the 
  specialization morphism $\mr{sp}_{2i}$ to the algebraic cohomology group
  $H^{2i}_A(\mc{G}_{X_0}(2,\mc{L}_0))$
 factors through $H^{2i}_A(\mc{G}(2,\mc{L})_\infty)$.
  Let $\gamma \in H^{2i}_A(\mc{G}_{X_0}(2,\mc{L}_0))$. 
  By definition,  for $t=0,1$ we have $j_t^*(\gamma) \in H^{2i}_A(\mc{G}_t)$. 
  In particular, $j_t^*(\gamma) \in H^{i,i}(\mc{G}_t)$ for $t=0,1$.
  By the Mayer-Vietoris sequence \eqref{eq:viet}, this implies 
   $\gamma \in H^{i,i}\mr{Gr}^W_{2i}H^{2i}(\mc{G}_{X_0}(2,\mc{L}_0), \mb{C})$.
 As $\mr{sp}_{2i}$ is a morphism of mixed Hodge structures, this implies  
 $\mr{sp}_{2i}(\gamma) \in H^{2i}_{\mr{Hdg}}(\mc{G}(2, \mc{L})_\infty)$ (see Notation \ref{note:hdg}).
 By Corollary \ref{cor:lim}, $H^{2i}_{A}(\mc{G}(2, \mc{L})_\infty)=H^{2i}_{\mr{Hdg}}(\mc{G}(2, \mc{L})_\infty)$.
 Hence, $\mr{sp}_{2i}(\gamma) \in H^{2i}_A(\mc{G}(2,\mc{L})_\infty)$. This proves our claim.    
 
 Next we see that restriction of the specialization morphism to algebraic classes  is surjective. 
 Let $\alpha_s \in H^2(M_{\mc{X}_s}(2,\mc{L}_s))$ and $\beta_s \in H^4(M_{\mc{X}_s}(2,\mc{L}_s))$ be as in 
 Remark \ref{rem:surj} and $\theta_s \in H^6(M_{\mc{X}_s}(2,\mc{L}_s))$ be as in the proof of
 Corollary \ref{cor:lim} (same as $\phi_s^6(\theta_\infty)$). Note that, $\alpha_s, \beta_s$ and $\theta_s$ arise as 
 restriction to the fiber over $s$, of linear combinations of the Chern classes of the universal bundle $\mc{U}$
 on $\mc{G}(2,\mc{L})_{\Delta^*}$ (see Remark \ref{rem:surj}). 
 Hence, $\alpha_s, \beta_s$ and $\theta_s$ are monodromy invariant (they are restrictions of global sections of 
  local systems).
 By taking closure 
 (over the unit disc) of the Chern classes of $\mc{U}$, the classes $\alpha_s, \beta_s$ and $\theta_s$ extend as algebraic 
 classes to the central fiber $\mc{G}_{X_0}(2,\mc{L}_0)$. This implies that there exists 
 $\alpha_0 \in H^2_A(\mc{G}_{X_0}(2,\mc{L}_0)), \beta_0 \in  H^4_A(\mc{G}_{X_0}(2,\mc{L}_0))$
 and $\theta_0 \in  H^6_A(\mc{G}_{X_0}(2,\mc{L}_0))$ such that the specialization morphism 
 maps $\alpha_0$ (resp. $\beta_0, \theta_0$) to $\alpha_\infty$ (resp. $\beta_\infty, \theta_\infty$) in 
 $H^*_A(\mc{G}(2,\mc{L})_\infty)$. By Corollary \ref{cor:lim}, $H^*_A(\mc{G}(2,\mc{L})_\infty)$
 is generated by $\alpha_\infty, \beta_\infty$ and $\theta_\infty$. Hence, 
 \[\mr{sp}_{2i}: H^{2i}_A(\mc{G}_{X_0}(2,\mc{L}_0)) \to H^{2i}_A(\mc{G}(2,\mc{L})_\infty)\]
 is surjective. By the exactness of \eqref{ntor09}, $\ker(\mr{sp}_{2i}) \cap H^{2i}_A(\mc{G}_{X_0}(2,\mc{L}_0))$
 consists of classes \[\sigma \in H^{2i}_{\mr{Hdg}}(\mc{G}_{X_0}(2,\mc{L}_0))\, \mbox{ such that }\, 
 j_t^*\sigma \in 
 H^{2i}_A(\mc{G}_t)\, \mbox{ and }\]
 there exists $\gamma \in H^{2i-2}(\mc{G}_0 \cap \mc{G}_1)$ such that 
 $i_{t,_*}(\gamma)=j_t^*(\sigma)$, where $i_t: \mc{G}_0 \cap \mc{G}_1 \hookrightarrow \mc{G}_t$ for $t=0,1$ is 
 the natural inclusion. Since $i_{t,_*}$ is a Gysin morphism (of pure Hodge structures) from 
 $H^{2i-2}(\mc{G}_0 \cap \mc{G}_1)(-1)$ to $H^{2i}(\mc{G}_t)$, $\gamma$ is a Hodge class (as $j_t^*\sigma$
 is a Hodge class). Moreover, by assumption $M_{\widetilde{X}_0}(2,\widetilde{\mc{L}}_0)$ satisfies the 
 Hodge conjecture and so does $\mc{G}_0 \cap \mc{G}_1$ as it is a $\mb{P}^1 \times \mb{P}^1$-bundle over $M_{\widetilde{X}_0}(2,\widetilde{\mc{L}}_0)$.  In particular, 
 $\gamma$ is an algebraic class i.e., $\gamma \in H^{2i-2}_A(\mc{G}_0 \cap \mc{G}_1)$. 
 
 Similarly, as $h_{2i}$ from $H^{2i-4}(M_{\widetilde{X}_0}(2,\widetilde{\mc{L}}_0))(-2)$ to 
 $H^{2i-2}(\mc{G}_0 \cap \mc{G}_1)(-1)$ is a morphism of pure Hodge structures, the exactness of 
 \eqref{ntor09} implies that 
 $\ker(f_{2i}) \cap 
 H^{2i-2}_A(\mc{G}_0 \cap \mc{G}_1)$ equals the image of the morphism:
 \[H^{2i-4}_A(M_{\widetilde{X}_0}(2,\widetilde{\mc{L}}_0))= H^{2i-4}_{\mr{Hdg}}(M_{\widetilde{X}_0}(2,\widetilde{\mc{L}}_0))(-2) \xrightarrow{h_{2i}} H^{2i-2}_{\mr{Hdg}}(\mc{G}_0 \cap \mc{G}_1)(-1)=H^{2i-2}_A(\mc{G}_0 \cap \mc{G}_1).\]
 The injectivity of $h_{2i}$ follows from the exactness of \eqref{ntor09}. This proves the lemma. 
 \end{proof}


 \begin{thm}\label{hod02}
 For the central fibre $\mc{G}_{X_0}(2,\mc{L}_0)$ of the relative Gieseker's moduli space  we have,
 \[H^*_{\mr{Hdg}}(\mc{G}_{X_0}(2,\mc{L}_0))=H^*_A(\mc{G}_{X_0}(2,\mc{L}_0)).\] 
 Moreover, the restriction morphism $j_0^*:H^i(\mc{G}_{X_0}(2,\mc{L}_0))  \to H^i(\mc{G}_0)$ is surjective and the irreducible components $\mc{G}_{0}$ and $\mc{G}_1$ satisfy the Hodge conjecture. 
 
 In particular, the Hodge conjecture holds for a desingularization of $U_{X_0}(2,\mc{L}_0)$.
 \end{thm}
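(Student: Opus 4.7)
The plan is to combine the Leray--Hirsch decompositions of \S\ref{note:ner01} with the two exact sequences \eqref{eq:alg0} and \eqref{eq:alg1} to bootstrap from the smooth-curve case \eqref{eq:alg4} to the nodal-curve setting. First I would establish the Hodge conjecture for the two auxiliary pieces $\mc{G}_0 \cap \mc{G}_1$ and $\mc{G}_1$: in \eqref{ntor04} and \eqref{ntor05}, the generators $\xi_1, \xi_2$ on the $\mb{P}^1 \times \mb{P}^1$-bundle $\rho_1$ and $\xi''$ on the $\mb{P}^3$-bundle $\rho_2$ lift to first Chern classes of relative $\mo(1)$ line bundles, hence are algebraic. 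Combined with \eqref{eq:alg4} and the multiplicativity of algebraic classes, the Leray--Hirsch isomorphism yields
\[ H^*_{\mr{Hdg}}(\mc{G}_0 \cap \mc{G}_1, \mb{Q}) = H^*_A(\mc{G}_0 \cap \mc{G}_1, \mb{Q}), \qquad H^*_{\mr{Hdg}}(\mc{G}_1, \mb{Q}) = H^*_A(\mc{G}_1, \mb{Q}). \]

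The next step is a five-lemma comparison between \eqref{eq:alg0} and \eqref{eq:alg1}. The first two entries of the two sequences match ``Hodge $=$ algebraic'' by \eqref{eq:alg4} and the preceding step. For the fourth term, the argument already present inside the proof of the previous lemma shows that the weight-$2i$ Hodge classes of $H^{2i}(\mc{G}(2,\mc{L})_\infty,\mb{Q})$ are exactly the monodromy-invariant Hodge classes on a very general fibre $M_{\mc{X}_s}(2,\mc{L}_s)$; these coincide with the algebraic classes because the latter are automatically $T_{\mc{G}(2,\mc{L})_s}$-invariant via the theta divisor and the surjection $\nu'_s$ recalled in \S\ref{recsmcase}. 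A direct diagram chase (five lemma) then forces
\[ H^{2i}_{\mr{Hdg}}(\mc{G}_{X_0}(2,\mc{L}_0), \mb{Q}) = H^{2i}_A(\mc{G}_{X_0}(2,\mc{L}_0), \mb{Q}). \]

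To conclude the remaining assertions I would first prove the surjectivity of $j_0^*$ via the Mayer--Vietoris sequence for $\mc{G}_{X_0}(2,\mc{L}_0) = \mc{G}_0 \cup \mc{G}_1$; this reduces to the inclusion $\Ima(i_1^*) \subseteq \Ima(i_2^*)$ as submodules of $H^*(\mc{G}_0 \cap \mc{G}_1, \mb{Q})$. Using the Leray--Hirsch descriptions, the relation $i_2^*(\xi'') = \xi_1 + \xi_2$ (which comes from the quadric embedding $\mb{P}^1 \times \mb{P}^1 \hookrightarrow \mb{P}^3$), and the birational identification $\mc{G}_0 \setminus Z' \cong P_0 \setminus Z$ with the $\ov{\mr{SL}}_2$-bundle $P_0$, this containment can be verified fibrewise. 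Granted surjectivity, any Hodge class $\alpha \in H^{2i}_{\mr{Hdg}}(\mc{G}_0, \mb{Q})$ lifts, by strictness of the mixed Hodge structure morphism $j_0^*$, to a class $\beta \in \mr{Gr}^W_{2i}H^{2i}(\mc{G}_{X_0}(2,\mc{L}_0),\mb{Q})$ of Hodge type $(i,i)$; since $j_1^*(\beta)$ is then Hodge on $\mc{G}_1$ and hence algebraic by the first step, $\beta$ lies in $H^{2i}_A(\mc{G}_{X_0}(2,\mc{L}_0),\mb{Q})$ by the second step, so $\alpha = j_0^*(\beta)$ is algebraic on $\mc{G}_0$. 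The analogous argument treats $\mc{G}_1$, and since $\mc{G}_0$ is a desingularization of $U_{X_0}(2,\mc{L}_0)$, the final claim follows. The main obstacle will be the verification of $\Ima(i_1^*) \subseteq \Ima(i_2^*)$ (equivalently, the surjectivity of $j_0^*$), since the restriction from the $\mb{P}^3$-bundle $\mc{G}_1$ onto the $\mb{P}^1 \times \mb{P}^1$-bundle $\mc{G}_0 \cap \mc{G}_1$ is not surjective on fibrewise cohomology, so the argument genuinely uses the fine geometry of the modification $\mc{G}_0 \setminus Z' \cong P_0 \setminus Z$.
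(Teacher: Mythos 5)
Your first two steps (Hodge $=$ algebraic for $\mc{G}_0 \cap \mc{G}_1$ and $\mc{G}_1$ via Leray--Hirsch, then the five-lemma comparison of the algebraic and Hodge versions of the specialization sequence, using \eqref{eq:alg4} for the term at infinity) are exactly the paper's argument for the identity $H^*_{\mr{Hdg}}(\mc{G}_{X_0}(2,\mc{L}_0),\mb{Q})=H^*_A(\mc{G}_{X_0}(2,\mc{L}_0),\mb{Q})$, and your final deduction of the Hodge conjecture for $\mc{G}_0$ from the surjectivity of $j_0^*$ (lift a Hodge class through the surjection of pure polarizable Hodge structures, apply the first part, restrict back) also matches the paper. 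The problem is the surjectivity of $j_0^*$ itself, which you correctly identify as the crux but do not actually prove. Reducing via Mayer--Vietoris to the inclusion $\Ima(i_1^*) \subseteq \Ima(i_2^*)$ is only a restatement of the claim, and your proposed verification is not viable as described: since $i_2^*(\xi'')=\xi_1+\xi_2$, the map $i_2^*$ has a nontrivial cokernel, isomorphic to $H^{i-2}(M_{\widetilde{X}_0}(2,\widetilde{\mc{L}}_0),\mb{Q})$ in each degree, so the inclusion is far from automatic; and there is no ``fibrewise'' computation of $i_1^*$ available because $\mc{G}_0$ is \emph{not} a fibre bundle over $M_{\widetilde{X}_0}(2,\widetilde{\mc{L}}_0)$ --- the identification $\mc{G}_0\setminus Z' \cong P_0\setminus Z$ with the $\ov{\mr{SL}}_2$-bundle is only birational, gives no Leray--Hirsch description of $H^*(\mc{G}_0,\mb{Q})$, and hence no handle on $\Ima(i_1^*)$ inside $H^*(\mc{G}_0\cap\mc{G}_1,\mb{Q})$. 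Indeed the whole difficulty of the theorem is that $H^*(\mc{G}_0,\mb{Q})$ has no such explicit presentation.

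The paper proves the surjectivity indirectly, by a dimension count that avoids ever computing $i_1^*$. One writes the long exact sequence of the pair $(\mc{G}_{X_0}(2,\mc{L}_0),\mc{G}_0)$ in weight-graded form, uses excision to identify $H^*(\mc{G}_{X_0}(2,\mc{L}_0),\mc{G}_0)$ with $H^*(\mc{G}_1,\mc{G}_0\cap\mc{G}_1)$, and computes the latter from $\ker(i_2^*)$ and $\mr{coker}(i_2^*)$ via the Leray--Hirsch decompositions. This yields the five-term sequence \eqref{eq:alg01}, in which the obstruction to surjectivity of $j_0^*$ is the image of $H^i(\mc{G}_0,\mb{Q})$ in $\mr{coker}(i_2^*)\cong H^{i-2}(M_{\widetilde{X}_0}(2,\widetilde{\mc{L}}_0),\mb{Q})$. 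Independently, Steenbrink's weight spectral sequence together with Poincar\'{e} duality identifies $\mr{Gr}^W_iH^{i+1}(\mc{G}_{X_0}(2,\mc{L}_0),\mb{Q})$ with the dual of $\ker(i_{1,*},i_{2,*})$, which Proposition \ref{ner15} computes to be $H^{i-2}(M_{\widetilde{X}_0}(2,\widetilde{\mc{L}}_0),\mb{Q})$ as well. A surjection between vector spaces of equal dimension is an isomorphism, so the preceding map out of $H^i(\mc{G}_0,\mb{Q})$ vanishes, which is precisely the inclusion $\Ima(i_1^*)\subseteq\Ima(i_2^*)$ you need. Without this (or some substitute), your proof of the second and third assertions of the theorem is incomplete.
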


  \begin{proof}
 Since $\mr{Gr}^W_{2i+1}H^{2i}(\mc{G}(2,\mc{L})_\infty)$ is pure of weight $2i+1$ and 
 the morphisms in \eqref{ntor09} are morphisms of mixed Hodge structure, we have the following exact sequence:
\begin{equation}\label{eq:alg2}
  0 \to H^{2i-4}_{\mr{Hdg}}(M_{\widetilde{X}_0}(2,\widetilde{\mc{L}}_0)) \to H^{2i-2}_{\mr{Hdg}}(\mc{G}_0 \cap \mc{G}_1) \xrightarrow{f_{2i}} H^{2i}_{\mr{Hdg}}(\mc{G}_{X_0}(2,\mc{L}_0)) \xrightarrow{\mr{sp}_{2i}} H^{2i}_{\mr{Hdg}}(\mc{G}(2,\mc{L})_\infty)  \to 0.
\end{equation}
We observed in the proof of Lemma \ref{lem:alg} that using the Mayer-Vietoris sequence \eqref{eq:viet},
there is a natural inclusion from $H^{2i}_A(\mc{G}_{X_0}(2,\mc{L}_0))$ to 
$H^{2i}_{\mr{Hdg}}(\mc{G}_{X_0}(2,\mc{L}_0))$.
By Corollary \ref{cor:lim}, we have $H^{2i}_A(\mc{G}(2,\mc{L})_\infty)=H^{2i}_{\mr{Hdg}}(\mc{G}(2,\mc{L})_\infty)$.
Then combining \eqref{eq:alg2} with \eqref{eq:alg1} we have the following diagram of short exact sequences:
  {\small{\[\begin{diagram}\label{d1}
 0 &\rTo& \mr{ker}(\mr{sp}_{2i}) \cap H^{2i}_A(\mc{G}_{X_0}(2,\mc{L}_0)) & \rTo^{f_{2i}} &
 H^{2i}_A(\mc{G}_{X_0}(2,\mc{L}_0)) &\rTo^{\mr{sp}_{2i}}& H^{2i}_A(\mc{G}(2,\mc{L})_\infty)  &\rTo& 0 \\
 & & \dInto & \circlearrowright &\dInto &\circlearrowright &\dTo^{\mr{id}}  & & \\
    0 &\rTo& \mr{ker}(\mr{sp}_{2i}) \cap H^{2i}_{\mr{Hdg}}(\mc{G}_{X_0}(2,\mc{L}_0)) &\rTo^{f_{2i}} & H^{2i}_{\mr{Hdg}}(\mc{G}_{X_0}(2,\mc{L}_0)) &\rTo^{\mr{sp}_{2i}}&
    H^{2i}_{\mr{Hdg}}(\mc{G}(2,\mc{L})_\infty)  &\rTo& 0\\
   \end{diagram}\]}}
 To show that the middle arrow in the above diagram is an isomorphism, it suffices to prove that the first  
 vertical arrow is an isomorphism. 
 This follows from comparing exact sequences \eqref{eq:alg1} and \eqref{eq:alg2} and using that Hodge classes are algebraic both for $M_{\widetilde{X}_0}(2,\widetilde{\mc{L}}_0)$ by the discussion in \S \ref{recsmcase} and for 
 $\mc{G}_0\cap \mc{G}_1$ which is a $\mb{P}^1\times \mb{P}^1$-bundle over $M_{\widetilde{X}_0}(2,\widetilde{\mc{L}}_0)$.
 Hence, $H^*_{\mr{Hdg}}(\mc{G}_{X_0}(2,\mc{L}_0))=H^*_A(\mc{G}_{X_0}(2,\mc{L}_0))$. This completes the proof of the first part of the theorem.

We now show that the restriction morphism $j_0^*:H^i(\mc{G}_{X_0}(2,\mc{L}_0))  \to H^i(\mc{G}_0)$ is surjective. Consider the following exact sequences of pure Hodge structures (\cite[Proposition $5.46$]{pet}):
\begin{eqnarray}\label{gen02}
& 0 \to \mr{Gr}^W_iH^i(\mc{G}_{X_0}(2,\mc{L}_0),\mc{G}_0) \to \mr{Gr}^W_iH^i(\mc{G}_{X_0}(2,\mc{L}_0)) \to H^i(\mc{G}_0) \to \\ \nonumber
 & \to  \mr{Gr}^W_{i}H^{i+1}(\mc{G}_{X_0}(2,\mc{L}_0),\mc{G}_0) \to \mr{Gr}^W_{i}H^{i+1}(\mc{G}_{X_0}(2,\mc{L}_0)) \to 0, 
\end{eqnarray}
\begin{equation}\label{gen03}
0 \to \mr{Gr}^W_iH^i(\mc{G}_1,\mc{G}_0 \cap \mc{G}_1) \to H^i(\mc{G}_1) \xrightarrow{i_1^*} H^i(\mc{G}_0 \cap \mc{G}_1) \to 
 \mr{Gr}^W_{i}H^{i+1}(\mc{G}_1,\mc{G}_0 \cap \mc{G}_1) \to 0.
 \end{equation}
  Note that $r_1$ (resp. $r_3$) is injective (resp. surjective) with cokernel (resp. kernel) 
 isomorphic to $\mb{Q}(\xi_1-\xi_2)$ (resp. $\mb{Q}(\xi'')^3$), where notations as in \S \ref{note:ner01}. 
 Using \eqref{ntor04} and \eqref{ntor05}, this implies that the restriction morphism 
 \[\ker(i_1^*) \cong H^{i-6}(M_{\widetilde{X}_0}(2,\widetilde{\mc{L}}_0))(\xi'')^3 \mbox{ and } \mr{coker}(i_1^*) \cong H^{i-2}(M_{\widetilde{X}_0}(2,\widetilde{\mc{L}}_0))(\xi_1-\xi_2).\]
 The exact sequence \eqref{gen03} then implies, 
 \[\mr{Gr}^W_iH^i(\mc{G}_1, \mc{G}_0 \cap \mc{G}_1) \cong H^{i-6}(M_{\widetilde{X}_0}(2,\widetilde{\mc{L}}_0))(\xi'')^3 \mbox{ and }
 \mr{Gr}^W_{i}H^{i+1}(\mc{G}_1, \mc{G}_0 \cap \mc{G}_1) \cong H^{i-2}(M_{\widetilde{X}_0}(2,\widetilde{\mc{L}}_0))(\xi_1-\xi_2).\]
 By \cite[Example B.$5(2)$]{pet}, the pair $\mc{G}_0$ and $\mc{G}_1$ form an
 excisive couple i.e., the induced excision map $j_1^*: H^i(\mc{G}_{X_0}(2,\mc{L}_0),\mc{G}_0) \to H^i(\mc{G}_1,\mc{G}_0 \cap \mc{G}_1)$ is an isomorphism of mixed Hodge structures. 
  Then, the exact sequence \eqref{gen02} becomes
  \begin{eqnarray}
   &0 \to H^{i-6}(M_{\widetilde{X}_0}(2,\widetilde{\mc{L}}_0))(\xi'')^3 \to \mr{Gr}^W_iH^i(\mc{G}_{X_0}(2,\mc{L}_0)) \to H^i(\mc{G}_0) \to \nonumber \\
  &\to H^{i-2}(M_{\widetilde{X}_0}(2,\widetilde{\mc{L}}_0))(\xi_1-\xi_2) \to \mr{Gr}^W_{i}H^{i+1}(\mc{G}_{X_0}(2,\mc{L}_0)) \to 0. \label{eq:alg01}
  \end{eqnarray}
 Let $N:=\dim \mc{G}_{X_0}(2,\mc{L}_0)$. Using \cite[Example $3.5$]{ste1} and Poincar\'{e} duality, we have that 
 \[\mr{Gr}^W_{i}H^{i+1}(\mc{G}_{X_0}(2,\mc{L}_0)) \cong \mr{coker}((i_0^*-i_1^*):H^i(\mc{G}_0) \oplus H^i(\mc{G}_1) \to H^i(\mc{G}_0 \cap \mc{G}_1)) \cong \]
 \[\cong \mr{ker}((i_{0,_*},i_{1,_*}):H^{2N-i-2}(\mc{G}_0 \cap \mc{G}_1) \to H^{2N-i}(\mc{G}_0) \oplus H^{2N-i}(\mc{G}_1))^\vee.\]
 Note that, $\dim M_{\widetilde{X}_0}(2,\widetilde{\mc{L}}_0))=N-3$. By Propositions \ref{ner15} combined with Poincar\'{e} duality, we then conclude that 
 \[\mr{Gr}^W_{i}H^{i+1}(\mc{G}_{X_0}(2,\mc{L}_0)) \cong 
 (H^{2N-i-4}(M_{\widetilde{X}_0}(2,\widetilde{\mc{L}}_0))(\xi_1 \oplus -\xi_2))^\vee \cong H^{i-2}(M_{\widetilde{X}_0}(2,\widetilde{\mc{L}}_0)).\]
 Therefore, the exact sequence \eqref{eq:alg01} becomes the following short exact sequence 
 (since a surjective morphism of vector spaces of same dimension is isomorphic):
 \begin{equation}\label{eq:ner02}
  0 \to  H^{i-6}(M_{\widetilde{X}_0}(2,\widetilde{\mc{L}}_0))(\xi'')^3 \to \mr{Gr}^W_iH^i(\mc{G}_{X_0}(2,\mc{L}_0)) \to H^i(\mc{G}_0) \to 0.
 \end{equation}
 The above sequence shows the surjectivity of $j_0^*$. Using this and the equality 
 $H_A^*(\mc{G}_{X_0}(2,\mc{L}_0))=H_{\mr{Hdg}}^*(\mc{G}_{X_0}(2,\mc{L}_0))$, 
the Hodge conjecture for $\mc{G}_0$ follows. Since $\mc{G}_1$ is $\p3$-bundle over $M_{\widetilde{X_0}}(2,\widetilde{\mc{L}}_0)$, it satisfies the Hodge conjecture.
     This proves the theorem.
     \end{proof}

 As an easy consequence we can compute the algebraic Poincar\'{e} polynomial of $\mc{G}_0$.

\begin{thm}\label{cor11}
The \emph{algebraic Poincar\'{e} polynomial} $P_A(\mc{G}_0)$, for $\mc{G}_0$ is given by 
 \[P_A(\mc{G}_0):=\sum H^i_A(\mc{G}_0) t^i=\frac{(1-t^g)(1-t^{g+1})(t^2(1-t^{g-1})(1+t^2)+(1-t^{g+2}))}{(1-t)(1-t^2)(1-t^3)}.\]
\end{thm}

\begin{proof}
Let $i_1$ be as in Notation \ref{note:inc}.
Using the decompositions \eqref{ntor04} and \eqref{ntor05}, we observe that 
  \[(i_1^*)^{-1}(H^{2i}_A(\mc{G}_0 \cap \mc{G}_1)) \cong H^{2i}_A(\mc{G}_1).\]
   By Theorem \ref{hod02}, the restriction morphism $j_0^*:H^i(\mc{G}_{X_0}(2,\mc{L}_0))  \to H^i(\mc{G}_0)$ is surjective. As $j_0^*$ is a morphism of mixed Hodge structures, it maps 
   $H^{2i}_A(\mc{G}_{X_0}(2,\mc{L}_0))=H^{2i}_{\mr{Hdg}}(\mc{G}_{X_0}(2,\mc{L}_0))$ surjectively to 
   $H^{2i}_{\mr{Hdg}}(\mc{G}_0)=H^{2i}_A(\mc{G}_0)$, where the equalities follow from Theorem \ref{hod02}.
   Moreover, using the exactness of \eqref{eq:viet} and  $H^{2i}_A(\mc{G}_{X_0}(2,\mc{L}_0))=H^{2i}_{\mr{Hdg}}(\mc{G}_{X_0}(2,\mc{L}_0))$, $V:=\ker(j_0^*) \cap H^{2i}_A(\mc{G}_{X_0}(2,\mc{L}_0))$
   consists of pairs $(0,\gamma) \in H^{2i}_{\mr{Hdg}}(\mc{G}_0) \oplus H^{2i}_{\mr{Hdg}}(\mc{G}_1)$
   such that $i_1^*(\gamma)=0$ i.e., $V=\ker i_1^* \cap H^{2i}_A(\mc{G}_1)$ (as $H^{2i}_A(\mc{G}_1)=
   H^{2i}_{\mr{Hdg}}(\mc{G}_1)$). Using the decompositions 
  \eqref{ntor04} and \eqref{ntor05}, we observe that $V$
  is isomorphic to $H^{2i-6}_A(M_{\widetilde{X}_0}(2,\widetilde{\mc{L}}_0))(\xi^{''})^3$ i.e.,
  we have the following short exact sequence:
  \begin{equation}\label{eq:poly}
  0 \to H^{2i-6}_A(M_{\widetilde{X}_0}(2,\widetilde{\mc{L}}_0))(\xi^{''})^3 \to H^{2i}_A(\mc{G}_{X_0}(2,\mc{L}_0)) \xrightarrow{j_0^*} H^{2i}_A(\mc{G}_0) \to 0.
  \end{equation}
   Let $H(g,t):=(1-t^g)(1-t^{g+1})(1-t^{g+2})/((1-t)(1-t^2)(1-t^3))$. By \cite[$(5.1)$]{bala2},
 \[P_A(M_{\widetilde{X}_0}(2,\widetilde{\mc{L}}_0))=H(g-1,t) \mbox{ and } P_A(M_{\mc{X}_s}(2,\mc{L}_s))=H(g,t) \mbox{ for } s \in \Delta^* \mbox{ very general}.\]
 By definition, $P_A(\mc{G}(2,\mc{L})_\infty)=P_A(M_{\mc{X}_s}(2,\mc{L}_s))$ for very general $s \in \Delta^*$.
 Using the identification \eqref{ntor04}, we have $P_A(\mc{G}_0 \cap \mc{G}_1)=H(g-1,t)(1+2t^2+t^4)$.
 The exact sequence \eqref{eq:alg1} then implies 
 \begin{equation}
  P_A(\mc{G}_{X_0}(2,\mc{L}_0))=P_A(\mc{G}_0 \cap \mc{G}_1)t^2+P_A(\mc{G}(2,\mc{L})_\infty)-P_A(M_{\widetilde{X}_0}(2,\widetilde{\mc{L}}_0))t^4.
 \end{equation}
 which equals $H(g-1,t)(t^2+t^4+t^6)+H(g,t)$.
 Finally, using the short exact sequence \eqref{eq:poly}, we conclude that
     \[P_A(\mc{G}_0)=P_A(\mc{G}_{X_0}(2,\mc{L}_0))-P_A(M_{\widetilde{X}_0}(2,\widetilde{\mc{L}}_0))t^6=H(g-1,t)(t^2+t^4)+H(g,t).\]
  Substituting for $H(g,t)$ one immediately gets the corollary.
 \end{proof}


\begin{thebibliography}{10}

\bibitem{abe}
T.~Abe.
\newblock The moduli stack of rank-two {G}ieseker bundles with fixed
  determinant on a nodal curve {II}.
\newblock {\em International Journal of Mathematics}, 20(07):859--882, 2009.

\bibitem{arb}
E.~Arbarello, M.~Cornalba, and P.~Griffiths.
\newblock {\em Geometry of algebraic curves: volume II with a contribution by
  Joseph Daniel Harris}, volume 268.
\newblock Springer Science \& Business Media, 2011.

\bibitem{AH}
M.~F. Atiyah and F.~Hirzebruch.
\newblock Analytic cycles on complex manifolds.
\newblock {\em Topology}, 1(1):25--45, 1962.

\bibitem{bake}
M.~Baker.
\newblock Specialization of linear systems from curves to graphs.
\newblock {\em Algebra \& Number Theory}, 2(6):613--653, 2008.

\bibitem{bala2}
V.~Balaji, A.~D. King, and P.~E. Newstead.
\newblock Algebraic cohomology of the moduli space of rank 2 vector bundles on
  a curve.
\newblock {\em Topology}, 36(2):567--577, 1997.

\bibitem{indpre}
S.~Basu, A.~Dan, and I.~Kaur.
\newblock Degeneration of intermediate {J}acobians and the {T}orelli theorem.
\newblock {\em Documenta Mathematica}, 24:1739--1767, 2019.

\bibitem{genpre}
S.~Basu, A.~Dan, and I.~Kaur.
\newblock Generators of the cohomology ring, after {N}ewstead.
\newblock {\em Proceedings of the American Mathematical Society},
  150(06):2569--2577, 2022.

\bibitem{birk}
C.~Birkenhake and H.~Lange.
\newblock {\em Complex abelian varieties}, volume 302.
\newblock Springer Science \& Business Media, 2013.

\bibitem{DK2}
A~Dan and I.~Kaur.
\newblock N{\'e}ron models of intermediate {J}acobians associated to moduli
  spaces.
\newblock {\em Revista Matem{\'a}tica Complutense},
  DOI-10.1007/s13163-019-00333-y:1--26, 2019.

\bibitem{mumf}
A.~Dan and I.~Kaur.
\newblock Generalization of a conjecture of {M}umford.
\newblock {\em Advances in Mathematics}, 383:107676, 2021.

\bibitem{deba}
O.~Debarre.
\newblock {\em Higher-dimensional algebraic geometry}.
\newblock Springer Science \& Business Media, 2013.

\bibitem{delibla}
P.~Deligne.
\newblock Th{\'e}oreme de {L}efschetz et criteres de d{\'e}g{\'e}n{\'e}rescence
  de suites spectrales.
\newblock {\em Publications Math{\'e}matiques de l'Institut des Hautes
  {\'E}tudes Scientifiques}, 35(1):107--126, 1968.

\bibitem{deli2}
P.~Deligne.
\newblock {\em {\'E}quations diff{\'e}rentielles {\`a} points singuliers
  r{\'e}guliers}, volume 163.
\newblock Springer, 2006.

\bibitem{huy}
D.~Huybrechts and M.~Lehn.
\newblock {\em The geometry of moduli spaces of sheaves}.
\newblock Springer, 2010.

\bibitem{ind}
I.~Kaur.
\newblock The ${C_1}$ conjecture for the moduli space of stable vector bundles
  with fixed determinant on a smooth projective curve,.
\newblock \url{https://refubium.fu-berlin.de/handle/fub188/6611}, 2016.
\newblock Ph.D. thesis, Freie University Berlin.

\bibitem{kingn}
A.~D. King and P.~E. Newstead.
\newblock On the cohomology ring of the moduli space of rank 2 vector bundles
  on a curve.
\newblock {\em Topology}, 37(2):407--418, 1998.

\bibitem{kuli}
V.~S. Kulikov.
\newblock {\em Mixed Hodge structures and singularities}, volume 132.
\newblock Cambridge University Press, 1998.

\bibitem{lant1}
A.~Langer.
\newblock Moduli spaces of principal bundles on singular varieties.
\newblock {\em Kyoto Journal of Mathematics}, 53(1):3--23, 2013.

\bibitem{lewis}
J.~D. Lewis and B.~B. Gordon.
\newblock {\em A survey of the {H}odge conjecture}, volume~10.
\newblock American Mathematical Soc., 2016.

\bibitem{mumn}
D.~Mumford and P.~Newstead.
\newblock Periods of a moduli space of bundles on curves.
\newblock {\em American Journal of Mathematics}, 90(4):1200--1208, 1968.

\bibitem{MOS}
V.~Mu{\~n}oz, A.~G. Oliveira, and J.~S{\'a}nchez~Hern{\'a}ndez.
\newblock Motives and the {H}odge conjecture for moduli spaces of pairs.
\newblock {\em Asian journal of mathematics}, 19(2):281--306, 2015.

\bibitem{nagsesh}
D.~S. Nagaraj and C.~S. Seshadri.
\newblock Degenerations of the moduli spaces of vector bundles on curves {II}
  ({G}eneralized {G}ieseker moduli spaces).
\newblock In {\em Proceedings of the Indian Academy of Sciences-Mathematical
  Sciences}, volume 109, pages 165--201. Springer, 1999.

\bibitem{narafac}
M.~S. Narasimhan and T.~R. Ramadas.
\newblock Factorisation of generalised theta functions. i.
\newblock {\em Inventiones mathematicae}, 114(1):565--623, 1993.

\bibitem{new1}
P.~E. Newstead.
\newblock Characteristic classes of stable bundles of rank 2 over an algebraic
  curve.
\newblock {\em Transactions of the American Mathematical Society},
  169:337--345, 1972.

\bibitem{pan}
R.~Pandharipande.
\newblock A compactification over {$M_g$} of the {U}niversal {M}oduli {S}pace
  of {S}lope-{S}emistable vector bundles.
\newblock {\em Journal of the American Mathematical Society}, 9(2):425--471,
  1996.

\bibitem{pet}
C.~Peters and J.~H.~M. Steenbrink.
\newblock {\em Mixed {H}odge structures}, volume~52.
\newblock Springer Science \& Business Media, 2008.

\bibitem{pezz}
G.~Pezzini.
\newblock Lectures on spherical and wonderful varieties.
\newblock {\em Les cours du CIRM}, 1(1):33--53, 2010.

\bibitem{schvar}
W.~Schmid.
\newblock Variation of {H}odge structure: the singularities of the period
  mapping.
\newblock {\em Inventiones mathematicae}, 22(3-4):211--319, 1973.

\bibitem{ste1}
J.~Steenbrink.
\newblock Limits of {H}odge structures.
\newblock {\em Inventiones mathematicae}, 31:229--257, 1976.

\bibitem{ste2}
J.~H.~M. Steenbrink.
\newblock {\em Mixed Hodge structure on the vanishing cohomology}.
\newblock Department of Mathematics, University of Amsterdam, 1976.

\bibitem{sun1}
X.~Sun.
\newblock Moduli spaces of {SL}(r)-bundles on singular irreducible curves.
\newblock {\em Asia Journal of Mathematics}, 7(4):609--625, 2003.

\bibitem{tha}
M.~Thaddeus.
\newblock {\em Algebraic geometry and the Verlinde formula}.
\newblock PhD thesis, University of Oxford, 1992.

\bibitem{v4}
C.~Voisin.
\newblock {\em {H}odge Theory and Complex Algebraic Geometry-I}.
\newblock Cambridge studies in advanced mathematics-76. Cambridge University
  press, 2002.

\bibitem{voiconj}
C.~Voisin.
\newblock Some aspects of the {H}odge conjecture.
\newblock {\em Japanese Journal of Mathematics}, 2(2):261--296, 2007.

\end{thebibliography}
\end{document}